\newenvironment{ieee*}[1]{\begin{IEEEeqnarray*}{#1}}{\end{IEEEeqnarray*}\ignorespacesafterend}
\theoremstyle{plain}
\newtheorem{thm}{Theorem}[section]  
\newtheorem{lmm}[thm]{Lemma}  
\numberwithin{equation}{section} 
\numberwithin{figure}{section} 
\newtheorem{prp}[thm]{Proposition}
\theoremstyle{remark}
\newtheorem{rmr}[thm]{Remark}
\theoremstyle{definition}
\newtheorem{cnj}[thm]{Conjecture}
\newcommand{\T}{\mathbb T}
\newcommand{\R}{\mathbb R}
\newcommand{\Rdn}{(\R^d)^N}
\newcommand{\eqdef}{\vcentcolon=}
\newcommand{\st}{\ | \ }
\newcommand{\ie}{i.e.}
\newcommand{\eps}{\varepsilon}
\def\abs#1{|#1|}
\date{\today}
\author{Ugo Bindini}
\address{Department of Mathematics and Statistics \\ University of Jyväskylä \\ Finland}
\email{ugo.bindini@sns.it}
\author{Luigi De Pascale}
\address{Dipartimento di Matematica e Informatica, Universit\`a di Firenze \\ Viale Morgagni 67/a, 50134 Firenze \\ Italy}
\email{luigi.depascale@unifi.it}
\urladdr{http://web.math.unifi.it/users/depascal/}
\author{Anna Kausamo}
\address{Dipartimento di Matematica e Informatica, Universit\`a di Firenze \\ Viale Morgagni 67/a, 50134 Firenze \\ Italy}
\email{akausamo@gmail.com}
\title[On deterministic solutions for Coulomb cost]{On deterministic solutions for multi-marginal optimal transport with Coulomb cost}
\subjclass[2010]{49J45, 49N15, 49K30}
\keywords{Multimarginal optimal transportation, Monge-Kantorovich problem, Duality theory, Coulomb cost}
\begin{document}

\begin{abstract}  
In this paper we study the three-marginal optimal mass transportation problem for the Coulomb cost on the plane $\R^2$. 
The key question is the optimality of the so-called Seidl map, first disproved by Colombo and Stra. We generalize the partial positive result obtained by Colombo and Stra and give a necessary and sufficient condition for the radial Coulomb cost to coincide with a much simpler cost that corresponds to the situation where all three particles are aligned. Moreover, we produce an infinite class of regular counterexamples to the optimality of this family of maps.
 \end{abstract}
 
\maketitle

\section{Introduction}

\subsection{Multi-Marginal Optimal Mass Transportation Problem for Coulomb cost}
We denote by $\mathcal{P}(\R^d)$ the set of all Borel probability measures on the space $\R^d$ where $d\ge 1$ is the dimension of the space. In this paper we are interested in the Multi-Marginal Optimal Mass Transportation (MOT) problem for the Coulomb cost $\tilde c\colon \Rdn \to\R\cup\{+\infty\}$, 
\[\tilde c(x_1,\ldots,x_N)=\sum_{1\le i<j\le N}\frac{1}{|x_i-x_j|}.\]
 We fix a marginal measure $\tilde\rho\in\mathcal{P}(\R^d)$ and seek for minimizing the quantity
\begin{equation}\label{eq:mk}
 \int_{\Rdn}\tilde c \, d\gamma(x_1,\ldots,x_N)
\end{equation} 
over all \textit{couplings} $\gamma\in \mathcal{P}(\Rdn)$ of the marginal measures $\tilde\rho$, that is, over the set
\[\Pi_N(\tilde\rho) \eqdef \{\gamma\in \mathcal{P}(\R^d)~|~(pr_i)_\sharp\gamma=\tilde\rho\text{ for all }i\in \{1,\ldots,N\}\}\,,\]
where $pr_i$ is the projection on the $i$-th coordinate: 
\[pr_i(x_1,\ldots,x_N)=x_i~~~\text{for all }(x_1,\ldots,x_N)\in \Rdn. \]
Finding the minimal $\gamma$ for the problem \eqref{eq:mk} is often called solving the \emph{Monge-Kantorovich} (MK) problem, in honor of the French mathematician Gaspard Monge (1746--1818) and the Russian mathematician Leonid Vitaliyevich Kantorovich (1912--1986), both of whom can be considered founders of the field of optimal mass transportation. The existence of minimizers for the problem \eqref{eq:mk} is easily proven (we are minimizing a linear functional on a compact set) and their structure is rather well-understood (see \cite{de2015optimal}).
A much more challenging problem is the one of finding --- or even proving the existence of --- a \emph{deterministic} optimal coupling. A deterministic optimal coupling is a solution $\gamma_{opt}$ for the problem \eqref{eq:mk} of the type
\begin{equation}\label{eq:mongetype}
\gamma_{opt}=(Id,T_1,\ldots, T_{N-1})_\sharp\tilde\rho,
\end{equation}
where $T_i\colon \R^d\to \R^d$ are Borel functions such that $(T_i)_\sharp\tilde\rho=\tilde\rho$ for all $i\in \{1,\ldots,N-1\}$. This is equivalent to asking whether the equality
\begin{align*}
\min_{\gamma\in \Pi_N(\tilde\rho)} & \int_{\Rdn} \tilde c(x_1,\ldots, x_N)\, d\gamma(x_1,\ldots, x_N)\\
& = \min\left\{\int_{\R^d}\tilde c(x,T_1(x),\ldots, T_{N-1}(x))\, d\tilde\rho(x)~\bigg|~(T_i)_\sharp\tilde\rho=\tilde\rho\right\}\end{align*}
holds. If the answer is affirmative, we call any minimizing coupling of the type \eqref{eq:mongetype} a \emph{Monge minimizer}.

Since the cost function $\tilde c$ is symmetric with respect to permuting the coordinates and the density $\tilde\rho$ has no atoms, in view of the conjecture we are studying and also \cite{colombo2015equality, ghoussoub2014remarks, ghoussoub2014symmetric} we may restrict ourselves to seeking for Monge minimizers of the type
\[\gamma_{opt}=(Id, T, \dotsc, T^{N-1})_\sharp\tilde\rho,\]
where $T\colon \R^d\to\R^d$ is a Borel measurable function such that $T_\sharp\tilde\rho=\tilde\rho$ and $T^N=Id$. Here and from now on we denote for all natural numbers $k$ by $T^k$ the $k$-fold composition of $T$ with itself.

This paper concerns the case where $N=3$, $d=2$ and the density $\tilde\rho$ is radially symmetric, that is, $A_\sharp\tilde\rho = \tilde\rho$ for all $A\in SO(2)$. In this case (and also for general $N$ and $d$ as long as $\tilde\rho$ is radially symmetric) solving the MOT problem for Coulomb cost can be reduced to a one-dimensional problem where the underlying space is the positive halfline $\R_+:=[0,\infty)$. To make this notion precise, we define the \emph{radial cost} $c\colon (\R_+)^3 \to \R\cup\{+\infty\}$,
\begin{equation*}\label{radial-cost}
	\begin{split}
	c(r_1,r_2,r_3) = \min\left\{ \tilde c(v_1,v_2,v_3)~|~|v_i|=r_i
	\text{ for }i=1,2,3\right\} \\ \text{for all }(r_1,r_2,r_3)\in (\R_+)^3.
	\end{split}
\end{equation*}

For a given triple $(r_1,r_2,r_3)$ there exist many differently-oriented vectors $(v_1,v_2,v_3)$ that realize the above minimum.  Once a triple of minimizers $(v_1,v_2,v_3)$ has been fixed, the optimal configuration can be characterized by giving the radii and the angles between them.
We may always assume that the vector $v_1$ lies along the positive $x$-axis. With this choice in mind we denote by $\theta_2$ the angle between $v_1$ and $v_2$ and by $\theta_3$ the angle between $v_1$ and $v_3$. For this radial and angular data that corresponds to the triple of vectors $(v_1,v_2,v_3)\in (\R^2)^3$ we will sometimes use the notation $C(r_1,r_2,r_3,\theta_2,\theta_3)$ for the Coulomb cost $\tilde c(v_1,v_2,v_2)$. This allows to rewrite the radial cost function $c$ as
\begin{equation} \label{eq:radial-cost}
	c(r_1,r_2,r_3) = \min_{(\theta_2, \theta_3) \in \T^2} C(r_1, r_2, r_3, \theta_2, \theta_3)
\end{equation}
We also introduce \emph{the radial density }$\rho=|\cdot|_\sharp\tilde\rho$.\footnote{Here $\abs{\cdot}$ denotes the function $\abs{\cdot} \colon \R^2 \to \R_+$ given by $\abs{(x,y)} = \sqrt{x^2+y^2}$.} Now solving the (MK) problem for the Coulomb cost and the marginal measure $\tilde\rho$ is equivalent to solving the one-dimensional (MK) problem in the class $\Pi_3(\rho)$ for the radial density $\rho$ and the radial cost $c$, as will be made more rigorous in the next theorem, first proven by Pass (see \cite[Section 3]{pass2013remarks}). 
\begin{thm}
The full (MK) problem for the Coulomb cost
\begin{equation}\label{eq:mkoriginal}
\min\left\{\int_{(\R^2)^3}\tilde c(v_1,v_2,v_3) \, d\tilde\gamma(v_1,v_2,v_3)~|~\tilde\gamma\in\Pi_3(\tilde\rho)\right\}
\end{equation}
and the corresponding radial problem
\begin{equation} \label{eq:radial} 
\min\left\{\int_{(\R_+)^3}c(r_1,r_2,r_3)\, d\gamma~|~\gamma\in\Pi_3(\rho)\right\}
\end{equation}
are equivalent in the following sense: the measure $\gamma\in\Pi_3(\rho)$ is optimal for the problem \eqref{eq:radial} if and only if the measure 
\[\tilde\gamma:=\gamma(r_1,r_2,r_3)\otimes\mu^{r_1,r_2,r_3}\]
is optimal for the problem \eqref{eq:mkoriginal}. Above, $\mu^{r_1,r_2,r_3}$ is the singular probability measure on the $3$-dimensional torus defined by
\[\mu^{r_1,r_2,r_3}=\frac{1}{2\pi}\int_0^{2\pi}\delta_{t}\delta_{\theta_2+t}\delta_{\theta_3+t}\, dt,\]
where $(\theta_2, \theta_3)$ are minimizing angles $\theta_2=\angle (v_1,v_2)$, $\theta_3=\angle(v_1,v_3)$ for
\[ c(r_1,r_2,r_3) = \min\left\{ \tilde c(v_1,v_2,v_3)~|~|v_i|=r_i \text{ for }i=1,2,3\right\}. \]
\end{thm}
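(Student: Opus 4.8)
The strategy is to show that the minima in \eqref{eq:mkoriginal} and \eqref{eq:radial} coincide and that the map $\gamma\mapsto\tilde\gamma$ preserves the value of the cost; optimality in either direction then follows at once. For the easy inequality, let $R(v_1,v_2,v_3)=(|v_1|,|v_2|,|v_3|)$. If $\tilde\gamma\in\Pi_3(\tilde\rho)$ then $R_\sharp\tilde\gamma\in\Pi_3(\rho)$, since $(pr_i)_\sharp(R_\sharp\tilde\gamma)=|\cdot|_\sharp\tilde\rho=\rho$; and because $c$ is by construction the pointwise minimum of $\tilde c$ over vectors of prescribed lengths, $\tilde c\ge c\circ R$ everywhere, whence
\[
\int_{(\R^2)^3}\tilde c\,d\tilde\gamma\;\ge\;\int_{(\R_+)^3}c\,d(R_\sharp\tilde\gamma)\;\ge\;\min_{\gamma\in\Pi_3(\rho)}\int c\,d\gamma.
\]
Taking the infimum over $\tilde\gamma$ shows that the minimum in \eqref{eq:mkoriginal} is no smaller than that in \eqref{eq:radial}.

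For the reverse inequality we use the explicit gluing. Fix a Borel map $(r_1,r_2,r_3)\mapsto(\theta_2(r_1,r_2,r_3),\theta_3(r_1,r_2,r_3))$ selecting for each triple a minimizing pair of angles in \eqref{eq:radial-cost}; such a selection exists by a measurable selection theorem (Kuratowski--Ryll-Nardzewski), since the $\mathrm{argmin}$ multifunction has nonempty compact values in the compact set $\T^2$ and depends measurably on $(r_1,r_2,r_3)$. Then $(r_1,r_2,r_3)\mapsto\mu^{r_1,r_2,r_3}$ is a Borel family of probability measures on $\T^3$, and for any $\gamma\in\Pi_3(\rho)$ the measure $\tilde\gamma=\int\delta_{(r_1,r_2,r_3)}\otimes\mu^{r_1,r_2,r_3}\,d\gamma$, pushed forward to $(\R^2)^3$ by the coordinatewise polar-to-Cartesian map $(r,\theta)\mapsto r(\cos\theta,\sin\theta)$, is a well-defined probability measure. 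It lies in $\Pi_3(\tilde\rho)$: in polar coordinates the radius of the $i$-th component of $\tilde\gamma$ has law $(pr_i)_\sharp\gamma=\rho$, while conditionally on $(r_1,r_2,r_3)$ its angle is $\theta_i+t$ (with $\theta_1\equiv 0$) for $t$ uniform on $[0,2\pi)$, hence is uniform on $[0,2\pi)$ whatever the value of $\theta_i$; so the $i$-th marginal of $\tilde\gamma$ is $\rho$ tensored with the uniform measure on the circle, which is exactly $\tilde\rho$ because $\tilde\rho$ is radially symmetric. Moreover the cost is preserved: on the support of $\mu^{r_1,r_2,r_3}$ the configuration is a fixed minimizing configuration for the lengths $(r_1,r_2,r_3)$ rotated by $t$, and invariance of $\tilde c$ under the diagonal $SO(2)$-action gives $\tilde c=c(r_1,r_2,r_3)$ at $\mu^{r_1,r_2,r_3}$-a.e.\ point, so $\int\tilde c\,d\tilde\gamma=\int c\,d\gamma$. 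Passing to the infimum over $\gamma$ gives the reverse inequality, so the two minima agree; call their common value $m$. If $\gamma$ is optimal for \eqref{eq:radial}, then $\int\tilde c\,d\tilde\gamma=\int c\,d\gamma=m$ and $\tilde\gamma$ is optimal for \eqref{eq:mkoriginal}; conversely, if $\tilde\gamma=\gamma\otimes\mu^{r_1,r_2,r_3}$ is optimal for \eqref{eq:mkoriginal}, then $\int c\,d\gamma=\int\tilde c\,d\tilde\gamma=m$ and $\gamma$ is optimal for \eqref{eq:radial}.

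Apart from the two soft inequalities, the load-bearing points are (i) producing a genuinely \emph{Borel} selection of minimizing angles, so that $(r_1,r_2,r_3)\mapsto\mu^{r_1,r_2,r_3}$ and the gluing are measurable, and (ii) the marginal computation, which is precisely where radial symmetry of $\tilde\rho$ is used and where one must observe that the a priori uncontrolled dependence of $\theta_i$ on the radii becomes irrelevant once the uniform rotation $t$ is added. A minor technicality is the set $\{\tilde c=+\infty\}$, where at least two coordinates sit at the origin: there both problems have value $+\infty$ and the statement is vacuous, so one may assume $m<+\infty$, equivalently that the competing measures charge $\{c<+\infty\}$.
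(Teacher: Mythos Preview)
Your argument is correct and is the standard reduction: pushforward by radii gives the inequality $\min\eqref{eq:mkoriginal}\ge\min\eqref{eq:radial}$, while the angular gluing $\gamma\otimes\mu^{r_1,r_2,r_3}$ gives the reverse inequality and identifies cost-preserving lifts, from which the biconditional on optimality follows. The paper itself does not supply a proof in the body --- it attributes the result to Pass \cite{pass2013remarks} and defers to an appendix whose content is not included --- so there is no written proof to compare against; your argument is exactly the expected one and would serve as that appendix.

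Two small remarks. First, your description of $\{\tilde c=+\infty\}$ as ``where at least two coordinates sit at the origin'' is not quite right for $\tilde c$ (it is infinite whenever two of the $v_i$ coincide anywhere), but it \emph{is} the correct description of $\{c=+\infty\}$ on $(\R_+)^3$, which is what matters for the radial problem; you may want to rephrase. Second, the measurable selection step is genuinely needed and you handle it correctly via Kuratowski--Ryll-Nardzewski; note that the paper's statement tacitly fixes one such selection when it writes ``where $(\theta_2,\theta_3)$ are minimizing angles,'' so your making this explicit is an improvement in rigor.
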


In \cite{seidl2007strictly} the authors conjectured the solution to the radial problem \eqref{eq:radial}. The conjecture is stated for all $d$ and $N$ but for the sake of clarity we formulate it here for $N=3$. 
\begin{cnj}[Seidl]\label{conj:seidl}
Let $\tilde\rho\in\mathcal{P}(\R^d)$ be radially-symmetric with radial density $\rho$.  Let $s_1$ and $s_2$ be such that 
\[\rho([0,s_1))=\rho([s_1,s_2))=\rho([s_2,\infty))=\frac13.\]
We define the map $T:[0,\infty)$ to be the unique map that sends, in the way that preserves the density $\rho$, 
\begin{itemize}
\item
the interval $[0,s_1)$ to the interval $[s_1,s_2)$ decreasingly,
\item
the interval $[s_1,s_2)$ to the half-line $[s_2,\infty)$ decreasingly, and 
\item
the half-line $[s_2,\infty)$ to the interval $[0,s_1)$ increasingly. 
\end{itemize}
More formally, this map is defined as 
\[T(x)=\begin{cases}
F^{-1}\left(\tfrac23-F(x)\right)&\text{ when }x\in[0,s_1)\\
F^{-1}\left(\tfrac43-F(x)\right)&\text{ when } x\in[s_1,s_2)\\
F^{-1}\left(1-F(x)\right)&\text{ when }x\in[s_2,\infty),
\end{cases}\]
where $F$ is the cumulative distribution function of $\rho$, that is, $F(r)=\rho([0,r))$. 

Then the map $T$ is optimal for the radial problem \eqref{eq:radial}.
\end{cnj}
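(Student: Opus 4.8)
The natural strategy is Kantorovich duality for the radial problem. Since $T$ is $\rho$-measure-preserving, the plan $\gamma_T\eqdef(Id,T,T^2)_\sharp\rho$ lies in $\Pi_3(\rho)$, and by the equivalence recorded in the theorem above it suffices to prove that $\gamma_T$ minimizes \eqref{eq:radial}. By duality this reduces to exhibiting a potential $u\colon\R_+\to\R\cup\{-\infty\}$ with
\[
u(r_1)+u(r_2)+u(r_3)\le c(r_1,r_2,r_3)\qquad\text{for all }(r_1,r_2,r_3)\in(\R_+)^3,
\]
together with equality $\rho$-a.e.\ on $\{(x,T(x),T^2(x)):x\in\R_+\}$; equivalently, one checks that $\operatorname{supp}\gamma_T$ is $c$-cyclically monotone, i.e.\ that $T$ solves the associated Monge problem. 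Note that $\operatorname{supp}\gamma_T$ consists exactly of the triples whose three radii lie one in each of the Seidl intervals $I_0=[0,s_1)$, $I_1=[s_1,s_2)$, $I_2=[s_2,\infty)$, coupled in the prescribed monotone way; in particular, along $\operatorname{supp}\gamma_T$ the radii are distinct and, after relabelling, $r_1<r_2<r_3$.

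The simplification I would try to exploit is to replace $c$ --- which hides the angular minimization \eqref{eq:radial-cost} --- by the much more explicit \emph{aligned cost} corresponding to the three particles being collinear. Tracking the Seidl transport near the interval endpoints suggests that the configuration keeping the particles apart is the one with the inner and the outer particle on one ray and the middle one on the opposite ray, whose Coulomb energy for $r_1<r_2<r_3$ is
\[
c_{\mathrm{line}}(r_1,r_2,r_3)\eqdef\frac{1}{r_1+r_2}+\frac{1}{r_2+r_3}+\frac{1}{r_3-r_1}\ \ge\ c(r_1,r_2,r_3).
\]
Up to relabelling $c_{\mathrm{line}}$ is a sum of two-body terms in the collinear positions, and for such a one-dimensional-type cost the optimality of the cyclic Seidl map should be provable constructively: one builds $u$ piece by piece on $I_0,I_1,I_2$ by integrating the marginal contribution $\partial_{r_k}c_{\mathrm{line}}$ along the graph of $T$, checks that the three pieces match continuously at $s_1$ and $s_2$, and proves the pointwise bound $u(r_1)+u(r_2)+u(r_3)\le c_{\mathrm{line}}$ --- this is the mechanism behind the partial positive result of Colombo and Stra. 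If, in addition, $c=c_{\mathrm{line}}$ held everywhere --- or at least $\sum_k u(r_k)\le c$ held globally while $c=c_{\mathrm{line}}$ held on $\operatorname{supp}\gamma_T$ --- the dual certificate would be complete and Conjecture~\ref{conj:seidl} would follow.

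The main obstacle is exactly the identity $c=c_{\mathrm{line}}$: whether the collinear configuration really attains the minimum in \eqref{eq:radial-cost} over \emph{all} angular data $(\theta_2,\theta_3)\in\T^2$, not just over collinear competitors. For triples of comparable radii this fails --- a genuinely two-dimensional, near-equilateral placement can have strictly smaller Coulomb energy --- so $c<c_{\mathrm{line}}$ there, the potential built from $c_{\mathrm{line}}$ violates the dual constraint, and a plan concentrated on such two-dimensional configurations strictly improves on $\gamma_T$. I therefore expect the honest outcome of this plan to be, not a proof of Conjecture~\ref{conj:seidl}, but rather a sharp necessary and sufficient condition on $\rho$ --- equivalently on the distribution function $F$, hence on $s_1,s_2$ and on $T$ --- under which $c=c_{\mathrm{line}}$ holds on the region swept by $\operatorname{supp}\gamma_T$ and the construction above does yield optimality, and, when that condition fails, explicit regular densities for which the two-dimensional competitor beats $\gamma_T$. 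Accordingly, the bulk of the work is the fine analysis of the angular optimization $\min_{(\theta_2,\theta_3)}C(r_1,r_2,r_3,\theta_2,\theta_3)$: locating and classifying its critical points, deciding when the collinear branch is the global minimum, and turning the resulting inequality among $r_1,r_2,r_3$ into a condition on $\rho$.
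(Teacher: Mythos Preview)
You correctly recognize that Conjecture~\ref{conj:seidl} is false in general --- the paper does not prove it but rather delineates when it holds and when it fails --- and your proposal accurately anticipates the paper's program: identify when the collinear configuration $(\theta_2,\theta_3)=(\pi,0)$ realizes the radial cost (your $c_{\mathrm{line}}$ is the paper's $c_\pi$), derive from this a conditional optimality result for the DDI map, and build counterexamples where the condition fails. The paper's Theorem~\ref{thm-phi1} supplies exactly the sharp criterion you predict, namely the polynomial inequality $r_2(r_3-r_1)^3-r_1(r_3+r_2)^3-r_3(r_1+r_2)^3\ge 0$, obtained via the critical-point analysis of the angular minimization that you outline in your final paragraph.

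The one methodological difference is on the positive side: instead of constructing a Kantorovich potential $u$ by integrating $\partial_{r_k}c_{\mathrm{line}}$ along the graph of $T$ as you sketch, the paper observes that once $c=c_\pi$ on the relevant region the cost is literally a one-dimensional Coulomb cost for the signed positions $-r_2,r_1,r_3\in\R$, so that the DDI map for $\rho$ becomes the monotone increasing map for the reflected measure $\tilde\rho$ on the line, and then invokes the one-dimensional optimality result of \cite{colombo2013multimarginal} directly (see the proof of Theorem~\ref{monge-solution}). This shortcut replaces your explicit dual construction by a change of variables into an already-solved problem; your route would also succeed but with more work.
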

The map introduced in \autoref{conj:seidl} is also called ``The Seidl map'' or  ``the $DDI$ map'' where the letters $DDI$ stand for \emph{Decreasing, Decreasing, Increasing}, identifying the monotonicities in which the first interval is mapped on the second, the second on the third, and finally the third back on the first. In an analogous manner one can define maps with different monotonicities: $III$, $IID$, $DDI$ and so on.  Since the marginals of  our MOT problem are all the same and equal to $\rho$, the only maps $T$ that make sense satisfy $T^3=Id$, which leads us to the so-called $\mathcal{T}:=\{I,D\}^3$ class, first introduced by Colombo and Stra in \cite{colombo2016counterexamples}:
\[\mathcal{T}:=\{III,DDI,DID,IDD\}.\]

In \cite{colombo2016counterexamples} the authors were the first to disprove Seidl's conjecture. They showed that for $N=3$ and $d=2$ the $DDI$ map fails to be optimal if the marginal measure is concentrated on a very thin annulus. They also provided a positive example for the optimality of the $DDI$ map: they constructed a density, concentrated on a union of three disjointed intervals the last of which is very far from the first two, so that the support of the transport plan given by the $DDI$ map is $c$-cyclically monotone. On the other hand, in \cite{de2019c} De Pascale proved that also for the Coulomb cost the $c$-cyclical monotonicity implies optimality: this implication had been previously proven only for cost functions that can be bounded from above by a sum of $\rho$-integrable functions. Using these results and making the necessary passage between the radial problem \eqref{eq:radial} and the full problem \eqref{eq:mkoriginal} one gets the optimality of the $DDI$ map for the example of Colombo and Stra. 
In \cite{colombo2016counterexamples} the authors also provided a counterexample for the non-optimality of all transport maps in the class $\mathcal{T}$.

In this paper we address the connection between the density $\rho$ and the optimality or non-optimality of the Seidl map for $d=2$ and $N=3$. 
Our main results are the following:
\begin{thm} \label{monge-solution}
	Let $\rho \in \mathcal{P}(\R_+)$ such that 
	\begin{equation} \label{phi1-condition}
	r_2(r_3-r_1)^3 -r_1(r_3+r_2)^3 - r_3(r_1+r_2)^3 \geq 0
	\end{equation}
	for $\rho$-a.e. $(r_1,r_2,r_3) \in [0,s_1] \times [s_1,s_2] \times [s_2,s_3]$. Then the DDI map $T$ provides an optimal Monge solution $\gamma = (Id, T, T^2)_{\#}(\rho)$ to the problem \eqref{eq:radial}.
\end{thm}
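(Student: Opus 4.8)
The plan is to prove that $\Gamma\eqdef\mathrm{supp}\,\gamma$, with $\gamma=(Id,T,T^2)_\sharp\rho$, is $c$-cyclically monotone, and then to conclude by combining two facts already available: the implication ``$c$-cyclically monotone support $\Rightarrow$ optimality'' for Coulomb-type costs proved by De Pascale \cite{de2019c}, and the equivalence between the radial problem \eqref{eq:radial} and the full problem \eqref{eq:mkoriginal} (the theorem of Pass recalled above), which transfers optimality of $\gamma$ to optimality of the lifted plan $\tilde\gamma$. One may equivalently run the entire argument at the level of $\tilde c$ and $\tilde\gamma$; I will argue at the radial level.

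The first ingredient is a pointwise identification of the radial cost on the region that matters. Hypothesis \eqref{phi1-condition} is exactly the condition under which $c$ coincides with the ``aligned'' cost in which two of the three charges sit on one half-line and the third on the opposite one — the necessary and sufficient condition announced in the introduction. Concretely, \eqref{phi1-condition} yields, for the admissible $(r_1,r_2,r_3)$ in $\Omega\eqdef[0,s_1]\times[s_1,s_2]\times[s_2,s_3]$,
\[
c(r_1,r_2,r_3)=b(r_1,r_2,r_3)\eqdef\frac{1}{r_1+r_2}+\frac{1}{r_2+r_3}+\frac{1}{r_3-r_1},
\]
the Coulomb cost of the configuration in which the charges of radii $r_1<r_3$ lie on one ray and the charge of radius $r_2$ on the opposite one. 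Since $c$ is symmetric and $\Gamma$ is contained in $\Omega$ up to a cyclic permutation of the coordinates, along $\Gamma$ the cost $c$ is given by a cyclic relabelling of $b$.

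The combinatorial core is a rearrangement computation. Decompose $b=g_{12}+g_{23}+g_{13}$ with $g_{12}(r_1,r_2)=(r_1+r_2)^{-1}$, $g_{23}(r_2,r_3)=(r_2+r_3)^{-1}$ and $g_{13}(r_1,r_3)=(r_3-r_1)^{-1}$; inspecting mixed second derivatives shows $g_{12},g_{23}$ are supermodular while $g_{13}$ is submodular. By the definition of the DDI map, $T$ is decreasing and $T^2$ is increasing on $[0,s_1]$, so along an orbit $\{a,Ta,T^2a\}$ the pairs $(a,Ta)$ and $(Ta,T^2a)$ are antimonotone while $(a,T^2a)$ is monotone; thus each of the three two-marginal projections of $\gamma$ is precisely the rearrangement that \emph{minimizes} the corresponding two-marginal cost (antimonotone for the supermodular $g_{12},g_{23}$, monotone for the submodular $g_{13}$). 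Consequently, for any finite family of orbits whose representatives all sit on one ``sheet'' of $\Gamma$ — say all first coordinates in $[0,s_1]$ — every competitor obtained by permuting the second and third marginals stays inside $\Omega$, so both sides of the $c$-cyclical monotonicity inequality are evaluated through $b$, and the inequality splits into the three rearrangement inequalities for $g_{12},g_{13},g_{23}$ (each term being compared with its value under the induced matching of the available radii), all three of which hold because the DDI arrangement realizes the minimizing matching in each. By cyclic symmetry the same is true for families concentrated on either of the other two sheets.

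The step I expect to be the real obstacle is the general competitor, where the chosen points of $\Gamma$ lie on different sheets: permuting the marginals then produces triples with two radii in the same radial shell, lying outside every permutation of $\Omega$, on which $c$ is strictly below the aligned cost and the clean decomposition is lost. One must still bound such triples from below — using the free inequality $c(r_1,r_2,r_3)\ge(r_1+r_2)^{-1}+(r_1+r_3)^{-1}+(r_2+r_3)^{-1}$ and the finer geometric fact that the three pairwise distances cannot be simultaneously near-maximal, and exploiting that \eqref{phi1-condition} forces $\mathrm{supp}\,\rho$ to split into three well-separated pieces (near $r_1\approx s_1$, $r_2\approx s_1$ one has $\phi_1<0$, so $\rho$ cannot charge such orbits) — and then check, shell by shell, that the resulting estimate again reduces to the supermodular/submodular rearrangement inequalities already used on a single sheet. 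Once $c$-cyclical monotonicity of $\Gamma$ is established, De Pascale's theorem gives optimality of $\gamma$ for \eqref{eq:radial}, which is the assertion.
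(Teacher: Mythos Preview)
Your strategy diverges from the paper's, and the divergence is exactly where you yourself flag the ``real obstacle.'' You try to verify $c$-cyclical monotonicity of $\Gamma=\mathrm{supp}\,\gamma$ directly, splitting the aligned cost $b$ into a supermodular and a submodular part and invoking rearrangement inequalities. That argument is clean when all the chosen points of $\Gamma$ lie on a single sheet, because then every competitor triple $(x_i^{(1)},x_{\sigma(i)}^{(2)},x_{\tau(i)}^{(3)})$ stays inside $\Omega$ and $c=b$ on both sides. But for points drawn from different sheets the permuted triples can land with two radii in the same tertile, where $c<b$ strictly, and your proposed fix---bounding $c$ from below by $(r_i+r_j)^{-1}$ sums and appealing to an unproved ``well-separated support'' claim---is not an argument. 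The hypothesis \eqref{phi1-condition} does force gaps near $s_1$ and $s_2$, but it does not by itself control triples with two coordinates in, say, $[0,s_1]$; your sketch does not close this case.

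The paper avoids this difficulty entirely by a change of variables rather than a direct $c$-CM check. The observation is that on $\Omega$ the aligned cost
\[
b(r_1,r_2,r_3)=\frac{1}{r_1+r_2}+\frac{1}{r_2+r_3}+\frac{1}{r_3-r_1}
\]
is \emph{literally} the one-dimensional Coulomb cost $c_{1D}(-r_2,r_1,r_3)$ of three points on the line. Reflecting the middle tertile, i.e.\ pushing $\rho$ forward by $\Psi(r)=-r$ on $[s_1,s_2]$ and $\Psi=\mathrm{Id}$ elsewhere, produces a measure $\tilde\rho\in\mathcal P(\R)$ supported on $[-s_2,-s_1]\cup[0,s_1]\cup[s_2,\infty)$, and under this reflection the DDI map becomes precisely the increasing cyclic map of Colombo--De~Pascale--Di~Marino \cite{colombo2013multimarginal}, which is known to be optimal for the one-dimensional Coulomb problem. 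The optimality of $\gamma_{DDI}$ is then read off from the one-dimensional result together with the equality $c=c_{1D}\circ\Psi^{\otimes 3}$ on the support of $\gamma_{DDI}$. In effect, the supermodular/submodular rearrangement you carry out on a single sheet \emph{is} the content of the one-dimensional theorem; the paper simply invokes that theorem wholesale instead of reproving a fragment of it and then getting stuck on the cross-sheet terms. The missing idea in your attempt is this reflection trick.
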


This theorem makes more quantitative the positive result of Colombo and Stra (see Remarks \ref{functionfi} and \ref{ColomboStra} for a more detailed description). Its proof also gives a necessary and sufficient condition for the radial Coulomb cost to coincide with a much simpler cost that corresponds to the situation where all three particles are aligned. More precisely, we show that
\begin{thm}
	Let $0 < r_1 < r_2 < r_3$. Then $(\theta_2,\theta_3) = (\pi, 0)$ is optimal in \eqref{eq:radial-cost} if and only if
	\[ r_2(r_3-r_1)^3 -r_1(r_3+r_2)^3 - r_3(r_1+r_2)^3 \geq 0. \]
	Moreover, if \eqref{phi1-condition} holds, $(\theta_2, \theta_3) = (\pi, 0)$ is the unique minimum point.
\end{thm}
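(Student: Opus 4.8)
The plan is to fix $0<r_1<r_2<r_3$ and study the function $C(\cdot):=C(r_1,r_2,r_3,\cdot,\cdot)$ on the torus $\T^2$. Taking $\theta_1=0$ and writing $d_{12}^2=r_1^2+r_2^2-2r_1r_2\cos\theta_2$, $d_{13}^2=r_1^2+r_3^2-2r_1r_3\cos\theta_3$, $d_{23}^2=r_2^2+r_3^2-2r_2r_3\cos(\theta_2-\theta_3)$, one has $C=d_{12}^{-1}+d_{13}^{-1}+d_{23}^{-1}$. Since the radii are distinct no two of the three points can collide, so $C$ is smooth and finite on the compact set $\T^2$ and attains its minimum, and $C(\pi,0)=(r_1+r_2)^{-1}+(r_3-r_1)^{-1}+(r_2+r_3)^{-1}$. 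Put
\[ a=\frac{r_1r_2}{(r_1+r_2)^3},\qquad b=\frac{r_2r_3}{(r_2+r_3)^3},\qquad c=\frac{r_1r_3}{(r_3-r_1)^3}, \]
and observe that, after multiplying through by $r_1r_2r_3>0$, condition \eqref{phi1-condition} is exactly $ab\ge c(a+b)$, equivalently $\tfrac1c\ge\tfrac1a+\tfrac1b$; in particular \eqref{phi1-condition} forces $b>c$ and $a>c$.

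For the ``only if'' direction I would use the second-order condition. A direct differentiation shows that every sine occurring in $\nabla C$ vanishes at $(\pi,0)$, so $(\pi,0)$ is a critical point, and the Hessian of $C$ there works out to
\[ H=\begin{pmatrix} a+b & -b\\ -b & b-c\end{pmatrix},\qquad\text{so}\qquad \det H=ab-c(a+b). \]
Since $H_{11}=a+b>0$ always, if \eqref{phi1-condition} fails strictly then $\det H<0$, so $H$ is indefinite and $(\pi,0)$ is a saddle point of $C$, hence not even a local, a fortiori not a global, minimizer. This is the contrapositive of the ``only if'' claim.

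For the ``if'' direction I would prove the stronger pointwise statement $C(\theta_2,\theta_3)\ge C(\pi,0)$ for all $(\theta_2,\theta_3)$, by linearizing the three summands. Writing $d_{12}^2=(r_1+r_2)^2-2r_1r_2(1+\cos\theta_2)$, $d_{13}^2=(r_3-r_1)^2+2r_1r_3(1-\cos\theta_3)$, $d_{23}^2=(r_2+r_3)^2-2r_2r_3(1+\cos(\theta_2-\theta_3))$, and using that $s\mapsto(A\pm Bs)^{-1/2}$ is strictly convex (hence lies above its tangent line at $s=0$, strictly away from $s=0$), one obtains
\[ \frac1{d_{12}}-\frac1{r_1+r_2}\ge a(1+\cos\theta_2),\quad \frac1{d_{23}}-\frac1{r_2+r_3}\ge b(1+\cos(\theta_2-\theta_3)),\quad \frac1{r_3-r_1}-\frac1{d_{13}}\le c(1-\cos\theta_3). \]
Summing these gives
\[ C(\theta_2,\theta_3)-C(\pi,0)\ \ge\ (a+b-c)+\big(a\cos\theta_2+b\cos(\theta_2-\theta_3)+c\cos\theta_3\big), \]
so it is enough to show that $F(\theta_2,\theta_3):=a\cos\theta_2+b\cos(\theta_2-\theta_3)+c\cos\theta_3$ has minimum value $c-a-b$ over $\T^2$. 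I would minimize first over $\theta_3$: writing $b\cos(\theta_2-\theta_3)+c\cos\theta_3=(b\cos\theta_2+c)\cos\theta_3+(b\sin\theta_2)\sin\theta_3$ as a single sinusoid of amplitude $\sqrt{b^2+c^2+2bc\cos\theta_2}$ gives $\min_{\theta_3}F(\theta_2,\cdot)=a\cos\theta_2-\sqrt{b^2+c^2+2bc\cos\theta_2}$. Then, with $t=\cos\theta_2\in[-1,1]$, the function $\Phi(t)=at-\sqrt{b^2+c^2+2bct}$ is convex, so its minimum on $[-1,1]$ is attained at $t=-1$ precisely when $\Phi'(-1)=a-\tfrac{bc}{\sqrt{b^2+c^2-2bc}}=a-\tfrac{bc}{b-c}\ge0$ (recalling $b>c$), i.e. when $ab\ge c(a+b)$, which is \eqref{phi1-condition}. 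Since $\Phi(-1)=-a-(b-c)=c-a-b$, under \eqref{phi1-condition} indeed $\min F=c-a-b$, and tracing back the minimizing angles are $\theta_2=\pi$, $\theta_3=0$.

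For uniqueness under \eqref{phi1-condition}, note that an equality $C(\theta_2,\theta_3)=C(\pi,0)$ forces equality in each of the three linearized bounds above, and strict convexity of $s\mapsto(A-Bs)^{-1/2}$ then forces $1+\cos\theta_2=0$ and $1+\cos(\theta_2-\theta_3)=0$, i.e. $\theta_2=\pi$ and $\theta_3=0$. The step I expect to require the most care is the choice of the three tangent-line estimates: they must be valid on the whole torus, tight exactly at $(\pi,0)$, and produce a threshold ($\min F=c-a-b$) coinciding with \eqref{phi1-condition}; once one notices that this threshold is $a(b-c)\ge bc$, everything else reduces to elementary one-variable calculus with the explicit expressions for $d_{12}$, $d_{13}$, $d_{23}$.
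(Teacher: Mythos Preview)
Your argument is correct, and for the ``only if'' direction it coincides with the paper's: both compute the Hessian of $C$ at $(\pi,0)$ and read off \eqref{phi1-condition} as the nonnegativity of its determinant. For the ``if'' direction, however, you take a genuinely different and considerably shorter route. The paper proves (Proposition~2.1) that under \eqref{phi1-condition} the only stationary points of $C$ on $\T^2$ are the four points $\{0,\pi\}^2$, via a delicate implicit-function analysis: it studies the curves $\alpha_\pi(\beta)$ and $\widehat\alpha_\pi(\beta)$ defined by the two stationarity equations, and separates them by their tangent lines at $\beta=0$ using two auxiliary lemmas (one comparing the functions $g_{ij}$, one a convexity-type estimate for $g_{ij}'$). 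Only after that does it compare the four values $C(0,0),C(0,\pi),C(\pi,0),C(\pi,\pi)$. You bypass all of this by bounding each $d_{ij}^{-1}$ from below by its affine tangent at the target configuration, reducing the problem to minimizing the explicit trigonometric polynomial $F(\theta_2,\theta_3)=a\cos\theta_2+b\cos(\theta_2-\theta_3)+c\cos\theta_3$; the one-variable minimization over $\theta_3$ and then over $t=\cos\theta_2$ is elementary, and the threshold $\Phi'(-1)\ge 0$ lands exactly on $ab\ge c(a+b)$. Your uniqueness argument via strict convexity of $s\mapsto(A\mp Bs)^{-1/2}$ is also cleaner than the paper's, which gets uniqueness only after knowing all the stationary points. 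The trade-off is that the paper's analysis yields structural information about \emph{all} critical points of $C$ (not just the global minimum), which might be useful elsewhere; your approach is tailored to the single inequality $C\ge C(\pi,0)$ and gives no such byproduct, but is substantially more transparent.
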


 We continue by using this new condition to construct a wide class of counterexamples for the optimality of the maps of the class $\mathcal{T}$. This class contains densities that are rather physical, for example positive, continuous and differentiable. 
\begin{thm} \label{thm-counterexample}
	Let $\rho \in \mathcal{P}(\R_+)$ positive everywhere such that $\frac{s_1}{s_2} > \frac{1 + 2\sqrt{3}}{5}$ and 
	\begin{equation}\label{eq:phi1onDDI}
	T(x)(T^2(x) - x)^3 - x(T^2(x) + T(x))^3 - T^2(x)(x + T(x))^3 \geq 0 
	\end{equation}
	for $\rho$-a.e. $x \in (0, s_1)$, where $T$ is the DDI map. Then \textit{none} of the maps $S$ in the class $\{D,I\}^3$ provides an optimal Monge solution $\gamma = (Id, S, S^2)_{\#}(\rho)$ to the problem \eqref{eq:radial}. Moreover, there exist smooth counterexample densities. 
\end{thm}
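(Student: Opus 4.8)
The plan is to show that under the stated hypotheses the Monge cost of \emph{every} map $S$ in the class $\mathcal{T} = \{III, DDI, DID, IDD\}$ strictly exceeds the cost of some competitor plan, hence none of them can be optimal. The first step is to reduce to the $DDI$ map via a structural argument: any optimal Monge map must be in $\mathcal{T}$ (by the reductions recalled in the introduction), so it suffices to rule out each of the four. For $III$, $DID$, $IDD$ one expects a cheap direct comparison — e.g. $III$ is the identity-like map whose support is typically very far from $c$-cyclically monotone — so the heart of the matter is to defeat the $DDI$ map, which is exactly where the Colombo--Stra positive example lived. The idea is that condition \eqref{eq:phi1onDDI} is precisely \eqref{phi1-condition} evaluated along the graph of the $DDI$ map, so by the second theorem above, for $\rho$-a.e. triple $(x, T(x), T^2(x))$ the radial cost $c$ coincides with the collinear cost $C(x, T(x), T^2(x), \pi, 0)$, and moreover this collinear configuration is the \emph{unique} optimizer. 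This means the $DDI$ transport plan, lifted to $(\R^2)^3$, sits inside the set where $c$ is given by the explicit aligned formula $\frac{1}{r_1+r_2} + \frac{1}{r_2+r_3} + \frac{1}{r_3-r_1}$ (with $r_1 = x < r_2 = T(x) < r_3 = T^2(x)$ after relabelling), so we have a completely explicit expression for its cost.

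The second step is to build an explicit better competitor. Here the quantitative hypothesis $\frac{s_1}{s_2} > \frac{1+2\sqrt3}{5}$ should enter. The natural candidate is to perturb the $DDI$ plan on a positive-measure set where the annulus $[0,s_1]\times[s_1,s_2]\times[s_2,s_3]$ is "thin enough" relative to this ratio that a local swap — pairing radii in a different monotone pattern, or using a non-collinear lift — lowers the cost. Concretely I would test cyclical monotonicity of the $DDI$ support directly: pick two triples $(x, T(x), T^2(x))$ and $(x', T(x'), T^2(x'))$ in the support and a permuted competitor, and show the $c$-cyclical monotonicity inequality fails, using the explicit aligned formula for $c$ from Step 1 at those points together with an \emph{upper} bound for $c$ at the swapped points (any specific choice of angles gives an upper bound, e.g. again collinear or a near-collinear guess). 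Since $c$-cyclical monotonicity is necessary for optimality (and by \cite{de2019c} also sufficient, though we only need necessity here), violating it kills $DDI$. The ratio condition $\frac{s_1}{s_2} > \frac{1+2\sqrt3}{5}$ is presumably the exact threshold making the relevant cubic inequality \eqref{eq:phi1onDDI} compatible with a simultaneous failure of cyclical monotonicity — i.e. it guarantees the regime is "not the far-apart Colombo--Stra regime" while still allowing \eqref{eq:phi1onDDI}.

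The third step is the regularity/existence claim: \emph{there exist smooth counterexample densities}. For this I would exhibit a concrete one-parameter family $\rho_\eta$ — say a smooth positive bump supported on (a neighborhood of) three intervals, with mass $\frac13$ on each third and with the interval endpoints arranged so that $\frac{s_1}{s_2}$ is pinned just above $\frac{1+2\sqrt3}{5}$ — and verify \eqref{eq:phi1onDDI} holds $\rho$-a.e. by a direct (if tedious) estimate on the explicit $DDI$ map $T(x) = F^{-1}(\tfrac23 - F(x))$ etc. The cleanest route is to first produce a piecewise-constant (histogram) density meeting all the conditions with strict inequalities, then mollify: all the hypotheses — positivity, the ratio bound, and \eqref{eq:phi1onDDI} with a strict margin — are open conditions stable under small $C^0$ perturbation of the density (the map $\rho \mapsto F \mapsto F^{-1} \mapsto T$ is continuous in the relevant sense on densities bounded away from $0$), so a sufficiently close smooth approximation still works.

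The main obstacle I anticipate is Step 2: turning the threshold $\frac{1+2\sqrt3}{5}$ into an actual strict decrease of cost. One must find the \emph{right} competitor and show the inequality is strict on a set of positive measure, and the cubic inequality \eqref{eq:phi1onDDI} runs in a direction that, naively, looks like it \emph{helps} $DDI$ (it is the condition making the collinear cost correct). The resolution must be that \eqref{eq:phi1onDDI} controls the \emph{value} of $c$ on the $DDI$ support but says nothing about $c$ at the \emph{swapped} points — so the ratio hypothesis is what forces a genuine gap between the two sides of the cyclical-monotonicity inequality. Getting the algebra of that gap to close, uniformly over the support, is the crux; everything else is either a soft reduction (Step 1, and the four-maps reduction) or a standard mollification argument (Step 3).
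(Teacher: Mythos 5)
Your proposal captures the overall architecture --- violate $c$-cyclical monotonicity along the DDI graph using the collinear formula, rule out the other $\{D,I\}^3$ maps, then construct smooth examples --- but three of the concrete steps are missing or diverge from what actually makes the argument close.

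First, the competitor for the DDI map. You correctly observe that the collinear formula controls $c$ on the DDI support but says nothing at the swapped triples, and you suggest using ``again collinear or a near-collinear guess'' for the upper bound there. That would not work. The paper picks one triple near $(0, s_2^-, s_2^+)$ and another near $(s_1^-, s_1^+, +\infty)$; after the swap, the triple $(\ell_1, r_2, r_3)$ lies roughly at $(s_1, s_2, s_2)$, three comparable radii for which the useful upper bound is the \emph{equilateral-triangle} configuration, $c(\ell_1, r_2, r_3) \le c_{\Delta}(\ell_1, \ell_1, \ell_1) = \sqrt{3}/\ell_1$. The threshold $\frac{s_1}{s_2} > \frac{1+2\sqrt{3}}{5}$ is exactly what makes $\sqrt{3}/s_1 + 1/s_1$ lose against the collinear terms $2/s_2 + 1/(2s_2) + 1/(2s_1)$; it does not arise from any compatibility with \eqref{eq:phi1onDDI}, which plays a different role (it guarantees $c = c_\pi$ on the DDI graph and, by monotonicity in $r_1$, also at one of the swapped triples).

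Second, ruling out the remaining maps is not a ``cheap direct comparison.'' For $S$ in $\{DID, IDD, III\}$ the paper locates a region of the graph of $S$ on which $c = c_\pi$ and then compares against the known optimal one-dimensional Coulomb coupling of \cite{colombo2013multimarginal} applied to the six ordered real points $-S(l), -S(r), l, r, S^2(r), S^2(l)$. The violation of cyclical monotonicity for $S$ is inherited from the 1D optimality structure, not obtained by the same two-triple swap used for DDI.

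Third, the smoothness claim. Mollifying a histogram and appealing to openness does not resolve the issue, because \eqref{eq:phi1onDDI} depends on $T$ and $T^2$, hence nonlocally on $\rho$ through its quantile function, and more fundamentally you have not produced any first example with strict margin --- which is the nontrivial content. The paper sidesteps this by choosing $\rho_1, \rho_2$ freely (smooth, with $\rho(0)/\rho(s_2) > 7/2$) and then \emph{defining} the tail as a pushforward, $\rho_3 = (\varphi + h)_\sharp \rho_1$ with $h > 0$, $h(0) = 0$, so that \eqref{eq:phi1onDDI} is enforced by construction; the auxiliary $h$ is built via Borel's lemma to match all derivatives at $0$, and this matching is precisely where the condition $\rho(0)/\rho(s_2) > 7/2$ comes from. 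That derivative-matching constraint at $s_2$ is invisible in your proposal and is the actual obstruction that a naive mollification would not handle.
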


\section{A study of the $c$ cost}
Recall the definition of the radial cost function \eqref{radial-cost}
\[ c(r_1, r_2, r_3) = \min \left\{ \tilde{c} (v_1, v_2, v_3) \st \abs{v_i} = r_i \text{ for all } i=1,2,3 \right\}. 
\]
 
By rotational invariance, we can suppose that the minimum is always achieved for $v_1 = r_1 \hat{x}$, \ie, in polar coordinates, $\theta_1 = 0$. Hence we can say that
\begin{equation} \label{radial-cost-2}
c(r_1, r_2, r_3) = \min \left\{
C (r_1, 0; r_2, \alpha; r_3,\beta) \st (\alpha, \beta) \in \T^2\right\};
\end{equation}
due to the dimension $d=2$, the cost $C(r_1, 0; r_2, \alpha; r_3,\beta)$ has an explicit expression
\begin{eqnarray*}
	C(r_1, 0; r_2, \alpha; r_3,\beta) &=& \frac{1}{(r_1^2+r_2^2-2r_1r_2\cos \alpha)^{1/2}} + \frac{1}{(r_1^2+r_3^2-2r_1r_3\cos \beta)^{1/2}} \\
	& &+ \frac{1}{(r_2^2+r_3^2-2r_2r_3\cos (\alpha - \beta))^{1/2}}
\end{eqnarray*}

The main ingredient for the proof of \autoref{monge-solution} and \autoref{thm-counterexample} is the following result, already stated in the introduction, which we recall for the sake of reader.

\begin{thm} \label{thm-phi1}
	Let $0 < r_1 < r_2 < r_3$. Then $(\alpha, \beta) = (\pi, 0)$ is optimal for the definition of the radial cost $c$ if and only if the polynomial condition \eqref{phi1-condition}
	\[ 
	r_2(r_3-r_1)^3 -r_1(r_3+r_2)^3 - r_3(r_1+r_2)^3 \geq 0,
	\]
holds.	Moreover, if the above holds, $(\alpha, \beta) = (\pi, 0)$ is the unique minimum point.
\end{thm}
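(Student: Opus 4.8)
The plan is to write the cost in the separated form
\[
C(r_1,0;r_2,\alpha;r_3,\beta)=f(\alpha)+g(\beta)+h(\alpha-\beta),
\]
with $f(\alpha)=(r_1^2+r_2^2-2r_1r_2\cos\alpha)^{-1/2}$, $g(\beta)=(r_1^2+r_3^2-2r_1r_3\cos\beta)^{-1/2}$ and $h(\gamma)=(r_2^2+r_3^2-2r_2r_3\cos\gamma)^{-1/2}$, and to work on the compact torus $\T^2$, on which $C$ is smooth since the hypothesis $0<r_1<r_2<r_3$ keeps all three pairwise distances bounded below by $\min\{r_2-r_1,r_3-r_2\}>0$. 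First I would observe that, because $\sin\pi=\sin0=0$, the gradient of $C$ vanishes at $(\alpha,\beta)=(\pi,0)$: this point is always a critical point. I would then prove the necessity of \eqref{phi1-condition} by a second--order computation at $(\pi,0)$, and the sufficiency (together with uniqueness of the minimizer) by reducing the two--dimensional minimization to a one--dimensional one.

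For necessity, using $f'(\pi)=g'(0)=h'(\pi)=0$ a direct computation gives the Hessian of $C$ at $(\pi,0)$ as
\[
H=\begin{pmatrix}A+B&-B\\-B&B-D\end{pmatrix},\qquad
A=\frac{r_1r_2}{(r_1+r_2)^3},\quad B=\frac{r_2r_3}{(r_2+r_3)^3},\quad D=\frac{r_1r_3}{(r_3-r_1)^3},
\]
all of $A,B,D$ being positive. Then $\det H=AB-D(A+B)$, so (since $A+B>0$ forces $B-D>0$ as soon as $\det H\ge0$) the matrix $H$ is positive semidefinite if and only if $\det H\ge0$, i.e. $\tfrac1D\ge\tfrac1A+\tfrac1B$; clearing denominators by $r_1r_2r_3$ this is exactly \eqref{phi1-condition}, and $H$ is positive definite precisely when the inequality is strict. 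Since a global minimizer is in particular a local one, $(\pi,0)$ optimal forces $H$ positive semidefinite, hence \eqref{phi1-condition}.

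For sufficiency I would first record, by writing $r_1^2+r_2^2-2r_1r_2\cos\alpha=(r_1+r_2)^2-4r_1r_2\cos^2(\alpha/2)$ and similarly for the other two radicands, that $f$ attains its minimum only at $\alpha=\pi$ with $f(\pi)=\tfrac1{r_1+r_2}$, that $h$ attains its minimum only at argument $\pi$ with value $\tfrac1{r_2+r_3}$, and that $g$ attains its maximum only at $\beta=0$ with $g(0)=\tfrac1{r_3-r_1}$. Consequently, for every $(\alpha,\beta)$,
\[
C(\alpha,\beta)-C(\pi,0)=\Bigl(f(\alpha)-\tfrac1{r_1+r_2}\Bigr)+\Bigl(h(\alpha-\beta)-\tfrac1{r_2+r_3}\Bigr)-\Bigl(\tfrac1{r_3-r_1}-g(\beta)\Bigr),
\]
a sum of two nonnegative terms minus a nonnegative term. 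Setting $Q(\beta):=\min_\alpha\bigl(f(\alpha)+h(\alpha-\beta)\bigr)$, which satisfies $Q(0)=f(\pi)+h(\pi)$, the theorem reduces to showing that $\beta=0$ is the strict minimizer of $\beta\mapsto Q(\beta)+g(\beta)$; by evenness of $f,g,h$ it is enough to treat $\beta\in[0,\pi]$. On $(0,\pi)$ the inner minimum should be attained at the unique solution $\alpha=\alpha(\beta)$ of $f'(\alpha)+h'(\alpha-\beta)=0$ lying in $(\pi,\pi+\beta)$; granting this, $Q\in C^1(0,\pi)$ and, by the envelope formula together with the critical relation, $Q'(\beta)=-h'(\alpha(\beta)-\beta)=f'(\alpha(\beta))$, so that $(Q+g)'(\beta)\ge0$ becomes
\[
\frac{r_1r_2\,|\sin\alpha(\beta)|}{\bigl(r_1^2+r_2^2-2r_1r_2\cos\alpha(\beta)\bigr)^{3/2}}\ \ge\ \frac{r_1r_3\,\sin\beta}{\bigl(r_1^2+r_3^2-2r_1r_3\cos\beta\bigr)^{3/2}}.
\]
I would then square, clear the radicals, and use $f'(\alpha(\beta))=-h'(\alpha(\beta)-\beta)$ a second time to eliminate the implicit angle, reducing the inequality to a polynomial one in $r_1,r_2,r_3$ that should be a consequence of \eqref{phi1-condition} --- equivalently, showing that $\beta\mapsto Q(\beta)+g(\beta)$ cannot acquire a second local minimum once it has one at $\beta=0$ --- and handle the endpoint $\beta=\pi$ (and any degeneracy of the inner minimizer) by a direct comparison. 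Finally, once $Q(\beta)+g(\beta)>C(\pi,0)$ for $\beta\in(0,\pi]$ is established, a minimizer of $C$ must have $\beta=0$, and $C(\alpha,0)=f(\alpha)+h(\alpha)+g(0)\ge f(\pi)+h(\pi)+g(0)=C(\pi,0)$ with equality only at $\alpha=\pi$, which gives uniqueness --- including the degenerate case in which \eqref{phi1-condition} holds with equality and the Hessian alone is inconclusive.

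The step I expect to be the main obstacle is precisely this one--variable analysis: establishing that the implicitly defined inner minimizer $\alpha(\beta)$ is unique, controlling its location and the $C^1$ regularity of $Q$, and then carrying out the elimination of $\alpha(\beta)$ so that $(Q+g)'\ge0$ collapses onto the clean polynomial condition \eqref{phi1-condition}, uniformly in $\beta\in[0,\pi]$ and including the boundary and equality cases.
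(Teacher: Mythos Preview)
Your necessity argument is correct and matches the paper's: both compute the Hessian of $C$ at $(\pi,0)$ and read off \eqref{phi1-condition} from $\det H\ge 0$.

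For sufficiency, however, there is a genuine gap at precisely the point you flag. Your plan is to ``square, clear the radicals, and use $f'(\alpha(\beta))=-h'(\alpha(\beta)-\beta)$ a second time to eliminate the implicit angle, reducing the inequality to a polynomial one in $r_1,r_2,r_3$.'' This elimination cannot succeed as stated: the constraint $f'(\alpha)=-h'(\alpha-\beta)$ is a transcendental relation among $\cos\alpha$, $\cos(\alpha-\beta)$ and the radii, not an algebraic one, so after squaring you are still left with trigonometric functions of both $\alpha(\beta)$ and $\beta$. No finite sequence of substitutions will collapse the inequality $(Q+g)'(\beta)\ge 0$ to a condition in $r_1,r_2,r_3$ alone that holds uniformly in $\beta$; the dependence on $\beta$ is essential and must be handled by a genuine analysis on $[0,\pi]$, not by algebra. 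Your fallback formulation (``$Q+g$ cannot acquire a second local minimum'') is correct in spirit but is exactly the hard content of the theorem.

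The paper takes a different route for sufficiency. Rather than reducing to one variable, it shows directly (Proposition~\ref{prop-stationary}) that under \eqref{phi1-condition} the full function $C$ has \emph{no} stationary points on $\T^2$ other than $\{0,\pi\}^2$, and then compares the four values. The proof of that proposition is where the work lies: one studies the two stationarity equations as implicit curves $\alpha_\pi(\beta)$ and $\widehat\alpha_\pi(\beta)$ through $(\pi,0)$, proves pointwise comparison lemmas $g_{13}\le g_{12}$, $g_{13}\le g_{23}$ (this is where \eqref{phi1-condition} first enters), and then establishes a tangent--line trapping estimate --- $\alpha_\pi(\beta)\le \pi+\alpha_\pi'(0)\beta$ and $\widehat\alpha_\pi(\beta)\ge \pi+\widehat\alpha_\pi'(0)\beta$ --- via a delicate convexity analysis of $g_{ij}'(\theta)$ as a function of $\cos\theta$. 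The separation $\alpha_\pi'(0)\le \widehat\alpha_\pi'(0)$ is then \emph{equivalent} to \eqref{phi1-condition}, which is how the polynomial condition re-enters globally. In your language this argument shows $(Q+g)'$ does not vanish on $(0,\pi)$, but it does so by geometric separation of two implicit curves rather than by any algebraic elimination; that missing analytic ingredient is what your proposal would need to supply.
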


\subsection*{Proof of \autoref{thm-phi1}}

Let $0 < r_1 < r_2 < r_3$ be fixed. In order to lighten the notation, we will omit the dependence on the radii when possible. We will also introduce the following functions for $i,j \in \{1,2,3\}$ and $\theta \in \T^1$:
\[ D_{ij}(\theta) = r_i^2 + r_j^2 - 2r_ir_j\cos \theta, \quad F_{ij}(\theta) = \frac{1}{D_{ij}(\theta)^{1/2}}  \]
It will be useful to compute the derivatives of $F_{ij}$, so we do it now:
\begin{eqnarray*}
	F_{ij}'(\theta) &=& -\frac{r_ir_j \sin \theta}{D_{ij}^{3/2}} \\
	F_{ij}''(\theta) &=& - \frac{r_ir_j\cos\theta}{D_{ij}^{3/2}} + \frac{3}{2} \frac{2r_i^2r_j^2 \sin^2 \theta}{D_{ij}^{5/2}} \\
	&=& -\frac{r_ir_j(r_ir_j\cos^2 \theta + (r_i^2+r_j^2)\cos \theta - 3r_ir_j)}{D_{ij}(\theta)^{5/2}}
\end{eqnarray*}
In order to simplify the notation even more, we denote
\[ Q_{ij}(t) = r_ir_jt^2 + (r_i^2+r_j^2)t - 3r_ir_j, \quad t \in [-1,1], \]
so that
\[ F_{ij}''(\theta) = -\frac{r_ir_j Q_{ij}(\cos \theta)}{D_{ij}(\theta)^{5/2}}. \]
Observe that $Q_{ij}(-1) = -(r_i+r_j)^2$ and $\quad Q_{ij}(1) = (r_i - r_j)^2$, so that
\begin{equation} \label{F-values}
F_{ij}''(0) = -\frac{r_ir_j}{\abs{r_i-r_j}^3} \quad \text{and} \quad F_{ij}''(\pi) = \frac{r_ir_j}{(r_i+r_j)^3}
\end{equation}

First we prove that if $(\alpha, \beta) = (\pi, 0)$ is optimal in \eqref{radial-cost-2}, then \eqref{phi1-condition} holds. Recall that the function to minimize is
\[ f(\alpha, \beta) = F_{12}(\alpha) + F_{13}(\beta) + F_{23}(\alpha - \beta) \]
and notice that $f \in C^\infty(\T^2)$. Thus, if $(\pi, 0)$ is minimal, it must be a stationary point with positive-definite Hessian. Let us compute the gradient and the Hessian of $f$:
\[ \nabla f(\alpha, \beta) =( F_{12}'(\alpha) + F_{23}'(\alpha - \beta), F_{13}'(\beta) - F_{23}'(\alpha - \beta) , \]
\[ H f(\alpha, \beta) = \begin{pmatrix}
F_{12}''(\alpha) + F_{23}''(\alpha - \beta) & -F_{23}''(\alpha - \beta) \\
-F_{23}''(\alpha - \beta) & F_{13}''(\beta) + F_{23}''(\alpha - \beta)
\end{pmatrix}. \]

Using \eqref{F-values}, we have
\[ H f(\pi, 0) = \begin{pmatrix}
\frac{r_1r_2}{(r_1+r_2)^3} + \frac{r_2r_3}{(r_2+r_3)^3} & -\frac{r_2r_3}{(r_2+r_3)^3} \\
-\frac{r_2r_3}{(r_2+r_3)^3} & -\frac{r_1r_3}{(r_3-r_1)^3} + \frac{r_2r_3}{(r_2+r_3)^3}
\end{pmatrix} \]
and
\begin{eqnarray*}
	\det Hf(\pi, 0) &=& -\frac{r_1^2r_2r_3}{(r_1+r_2)^3(r_3-r_1)^3} - \frac{r_1r_2r_3^2}{(r_2+r_3)^3(r_3-r_1)^3} \\
	&&+ \frac{r_1r_2^2r_3}{(r_1+r_2)^3(r_2+r_3)^3} \\
	&=& \frac{r_1r_2r_3[r_2(r_3-r_1)^3 -r_1(r_3+r_2)^3 - r_3(r_1+r_2)^3]}{(r_1+r_2)^3(r_2+r_3)^3(r_3-r_1)^3}.
\end{eqnarray*}

The positivity of $\det Hf(\pi, 0)$ implies the condition \eqref{phi1-condition}, which proves the first part of \autoref{thm-phi1}.

Now we assume that \eqref{phi1-condition} holds, and we want to get that $(\pi, 0)$ is the unique minimum point. The first (and most challenging) step is given by the following

\begin{prp} \label{prop-stationary}
	Suppose that $0 < r_1 < r_2 < r_3$ satisfy \eqref{phi1-condition}. Then $(0, 0)$, $(0,\pi)$, $(\pi,0)$, $(\pi,\pi)$ are the only stationary points of $f(\alpha, \beta)$.
\end{prp}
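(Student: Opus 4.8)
The plan is to analyze the gradient system
\[
F_{12}'(\alpha) + F_{23}'(\alpha - \beta) = 0, \qquad F_{13}'(\beta) - F_{23}'(\alpha - \beta) = 0,
\]
and show that under the polynomial condition \eqref{phi1-condition} the only solutions on $\T^2$ are the four points with coordinates in $\{0,\pi\}$. Writing $F_{ij}'(\theta) = -r_ir_j\sin\theta / D_{ij}(\theta)^{3/2}$, the obvious solutions are exactly those where every angle appearing ($\alpha$, $\beta$, $\alpha-\beta$) lies in $\{0,\pi\}$, which forces $(\alpha,\beta) \in \{0,\pi\}^2$; so the content of the proposition is the \emph{absence} of any other critical point. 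I would first reduce the search region: since $f(\alpha,\beta) = f(-\alpha,-\beta)$ and $f$ is $2\pi$-periodic in each variable, it suffices to look for stationary points in a fundamental domain, say $\alpha \in [0,\pi]$, and within that strip to rule out interior critical points (where $\sin\alpha, \sin\beta, \sin(\alpha-\beta)$ are not all zero).

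Next I would try to decouple the system. Adding the two equations gives $F_{12}'(\alpha) + F_{13}'(\beta) = 0$, i.e. $r_1r_2\sin\alpha/D_{12}(\alpha)^{3/2} = -r_1r_3\sin\beta/D_{13}(\beta)^{3/2}$; combined with $F_{23}'(\alpha-\beta) = -F_{12}'(\alpha)$ this expresses everything in terms of the single ``balance'' quantity $\lambda := F_{12}'(\alpha)$. The key observation is that the map $\theta \mapsto r_ir_j\sin\theta / D_{ij}(\theta)^{3/2}$ on $(0,\pi)$ is what controls everything: one should show it is strictly unimodal (increases then decreases, vanishing only at the endpoints), which follows from the sign analysis of $F_{ij}''$ already set up in the excerpt via the quadratic $Q_{ij}(\cos\theta)$ — indeed $Q_{ij}$ has exactly one root in $[-1,1]$ since $Q_{ij}(-1) < 0 < $ does not hold, rather $Q_{ij}(-1) = -(r_i+r_j)^2 < 0$ and $Q_{ij}(1) = (r_i-r_j)^2 > 0$, giving a single sign change. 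This unimodality lets me count, for each admissible value of $\lambda$, how many $\alpha$ (resp. $\beta$, resp. $\alpha-\beta$) can occur, and then a monotonicity/degree argument in $\lambda$ pins down the number of simultaneous solutions. This is where condition \eqref{phi1-condition} must enter: it should appear as the inequality guaranteeing that the only consistent value is $\lambda = 0$, i.e. that the ``competing'' branch of solutions is excluded precisely when $r_2(r_3-r_1)^3 \ge r_1(r_3+r_2)^3 + r_3(r_1+r_2)^3$.

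Concretely, I expect the cleanest route is: substitute $\beta$ as a function of $\alpha$ along the constraint curve $F_{12}'(\alpha) + F_{13}'(\beta) = 0$ (using unimodality of $F_{13}'$ to get at most two branches $\beta_{\pm}(\alpha)$), plug into the remaining equation $F_{13}'(\beta) = F_{23}'(\alpha-\beta)$, and reduce to a one-variable equation $g(\alpha) = 0$ on $(0,\pi)$; then show $g$ has no zero in the open interval, the endpoint behavior being governed exactly by the sign of $\det Hf$ at the four corner points, which the earlier computation ties to \eqref{phi1-condition}. The main obstacle will be this last step: controlling the sign of a transcendental one-variable function built from three different $D_{ij}$ terms, uniformly over the parameter region cut out by \eqref{phi1-condition}. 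I anticipate one needs to clear denominators to reduce to a polynomial inequality in $(\cos\alpha, \cos\beta, r_1, r_2, r_3)$ and then either factor it or bound it using \eqref{phi1-condition}; the fact that $\det Hf(\pi,0)$ already factors through the same polynomial $r_2(r_3-r_1)^3 - r_1(r_3+r_2)^3 - r_3(r_1+r_2)^3$ strongly suggests such a factorization exists and is the crux of the argument.
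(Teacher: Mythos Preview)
Your overall strategy matches the paper's: rewrite the stationary system as $g_{12}(\alpha)=-g_{13}(\beta)$ and $g_{13}(\beta)=g_{23}(\alpha-\beta)$ (with $g_{ij}=-F_{ij}'$), use the symmetry $(\alpha,\beta)\mapsto(-\alpha,-\beta)$ to restrict to $\beta\in[0,\pi]$, exploit the unimodality of $g_{ij}$ on $[0,\pi]$, and view each equation as implicitly defining $\alpha$ as a function of $\beta$. Up to this point you and the paper are doing the same thing.

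The genuine gap is your final step. You propose to ``clear denominators to reduce to a polynomial inequality'' and hope for a factorization through the polynomial in \eqref{phi1-condition}. This will not work: the $D_{ij}(\theta)^{3/2}$ in the denominators means the equations are genuinely transcendental in $\cos\alpha,\cos\beta$, and no amount of rational manipulation reduces them to a polynomial identity in five real variables. The paper does \emph{not} attempt this. Instead it proves a \emph{tangent-line separation}: the branch $\alpha_\pi(\beta)$ solving the first equation with $\alpha_\pi(0)=\pi$ satisfies $\pi\le \alpha_\pi(\beta)<\pi+\alpha_\pi'(0)\beta$, while the branch $\widehat\alpha_\pi(\beta)$ from the second equation satisfies $\pi+\widehat\alpha_\pi'(0)\beta<\widehat\alpha_\pi(\beta)\le \pi+\beta$. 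Condition \eqref{phi1-condition} is then exactly $\alpha_\pi'(0)\le\widehat\alpha_\pi'(0)$, so the two tangent lines separate the curves and they meet only at $\beta=0$. The substantive work is proving that each curve stays on the correct side of its tangent line; this requires a technical lemma showing that $g_{ij}'(\theta)$, viewed as a function $h(\cos\theta)$, lies below the two chords joining $(\pm 1,h(\pm 1))$ to its zero $(\cos\theta_{ij},0)$. That lemma in turn needs a careful analysis of the convexity and monotonicity of $h$ on $[-1,1]$ (it is not globally convex). None of this machinery is visible from the endpoint Hessian computation, and your proposal does not anticipate it.
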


The proof of \autoref{prop-stationary} is quite technical and long. For the sake of clarity we postpone it to the end of this section, in order to keep focusing on the main result.

Since $\{0, \pi\}^2$ are the only stationary points, the global minimum of $f$ must be between them. By direct comparison of the values $f(0,0)$, $f(0,\pi)$, $f(\pi, 0)$, $f(\pi, \pi)$ we will conclude that $(\pi, 0)$ is the unique minimum point.

We compute
\begin{eqnarray*}
	f(0,0) &=& \frac{1}{r_2-r_1} + \frac{1}{r_3-r_2} + \frac{1}{r_3-r_1} \\
	 f(0, \pi) &=& \frac{1}{r_2-r_1} + \frac{1}{r_3+r_2} + \frac{1}{r_3+r_1} \\
	f(\pi,0) &=& \frac{1}{r_2+r_1} + \frac{1}{r_3+r_2} + \frac{1}{r_3-r_1} \\ f(\pi, \pi) &=& \frac{1}{r_2+r_1} + \frac{1}{r_3-r_2} + \frac{1}{r_3+r_1},
\end{eqnarray*}
and observe that clearly $f(0,0) > f(0, \pi)$. To deduce the other inequalities we observe that the function
\[ h(x,y) = \frac{1}{x-y} - \frac{1}{x+y} \quad \text{for $0 < y < x$}. \]
is decreasing in $x$ and increasing in $y$, so $h(r_3,r_1) < h(r_2,r_1) \Rightarrow f(\pi, 0) < f(0, \pi)$ and $h(r_3,r_1) < h(r_3,r_2) \Rightarrow f(\pi, 0) < f(\pi, \pi)$, as wanted.

\begin{proof}[Proof of \autoref{prop-stationary}]
	A stationary point $(\alpha, \beta)$ must solve $\nabla f = 0$, \ie,
	\[ \begin{cases} \displaystyle -\frac{r_1r_2\sin \alpha}{D_{12}(\alpha)^{3/2}} - \frac{r_2r_3\sin (\alpha - \beta)}{D_{23}(\alpha - \beta)^{3/2}} = 0 \\ \displaystyle  -\frac{r_1r_3\sin \beta}{D_{13}(\beta)^{3/2}} + \frac{r_2r_3\sin (\alpha - \beta)}{D_{23}(\alpha - \beta)^{3/2}} = 0 \end{cases} \]
	which we rewrite as
	\begin{equation} \label{stationary-system}
	\begin{cases} \displaystyle \frac{r_1r_2\sin \alpha}{D_{12}(\alpha)^{3/2}} + \frac{r_1r_3\sin \beta}{D_{13}(\beta)^{3/2}} = 0 \\ \displaystyle  \frac{r_1r_3\sin \beta}{D_{13}(\beta)^{3/2}} - \frac{r_2r_3\sin (\alpha - \beta)}{D_{23}(\alpha - \beta)^{3/2}} = 0. \end{cases}
	\end{equation}
	Observe that the four points $(\alpha, \beta) \in \{0, \pi\}^2$ are always solutions for \eqref{stationary-system}. We will study this system in detail for $\beta \in [0,\pi]$. The conclusions can then be derived for $\beta \in [-\pi, 0]$ by making use of the change of variables $\tilde{\alpha} = -\alpha$, $\tilde{\beta} = - \beta$.
To proceed in the computations, we perform a finer study of the function
\[ g_{ij}(\theta) = -F_{ij}'(\theta) = \frac{r_ir_j\sin \theta}{D_{ij}(\theta)^{3/2}}\,,\]
so that the optimality conditions \eqref{stationary-system} will be rewritten in the form
\begin{equation} \label{gstationary-system}
\begin{cases} \displaystyle g_{12}(\alpha)=-g_{13}(\beta) \\ \displaystyle 
g_{13}(\beta) =g_{23}(\alpha-\beta) .
\end{cases}
\end{equation}
We now prove that for every $\beta$ in $[0,\pi]$ there exists at least one and at most two $\alpha$'s such that each of the two equations is satisfied.   
 
The derivative of $g_{ij}$ is
\[ g_{ij}'(\theta) = r_ir_j\frac{Q_{ij}(\cos\theta)}{D_{ij}(\theta)^{5/2}} \]
and it vanishes for
\[ Q_{ij}(\cos \theta_{ij}) = 0 \Rightarrow \cos \theta_{ij} = \frac{-r_i^2-r_j^2+\sqrt{r_i^4+14r_i^2r_j^2+r_j^4}}{2r_ir_j} \in (0,1). \]
By looking at the sign of the second degree polynomial $Q_{ij}$, we conclude that $g_{ij}(\theta)$ is increasing from 0 to its maximum on $[0, \theta_{ij}]$ and decreasing to 0 on $[\theta_{ij}, \pi]$ 

\begin{lmm} \label{lemma-gij}
	For every $\theta \in [0,\pi]$, $g_{13}(\theta) \leq g_{12}(\theta)$ and $g_{13}(\theta) \leq g_{23}(\theta)$. (See \autoref{plotg}.)
\end{lmm}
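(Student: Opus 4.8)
The plan is to reduce each of the two inequalities to an algebraic inequality in the radii alone. For $\theta\in\{0,\pi\}$ all three functions vanish, so I fix $\theta\in(0,\pi)$ and use $\sin\theta>0$; recall $g_{ij}(\theta)=r_ir_j\sin\theta\,D_{ij}(\theta)^{-3/2}$ with $D_{ij}(\theta)=r_i^2+r_j^2-2r_ir_j\cos\theta>0$. For $g_{13}\le g_{12}$ I would argue by monotonicity in the ratio of the radii: setting $u=r_j/r_1\ge 1$ one has $g_{1j}(\theta)=\tfrac{1}{r_1}G_\theta(u)$, where $G_\theta(u)=u\sin\theta\,(1+u^2-2u\cos\theta)^{-3/2}$, and a routine computation gives $G_\theta'(u)=\sin\theta\,(1+u^2-2u\cos\theta)^{-5/2}(1-2u^2+u\cos\theta)$. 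Since $1-2u^2+u\cos\theta\le 1-2u^2+u=-(2u+1)(u-1)\le 0$ for $u\ge 1$, the function $G_\theta$ is non-increasing on $[1,\infty)$; as $r_3/r_1\ge r_2/r_1\ge 1$, this gives $g_{13}(\theta)=\tfrac{1}{r_1}G_\theta(r_3/r_1)\le\tfrac{1}{r_1}G_\theta(r_2/r_1)=g_{12}(\theta)$. This part uses only $r_1<r_2<r_3$.

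For $g_{13}\le g_{23}$ the ratio trick is not available, so I would clear denominators instead: the inequality is equivalent to $r_1D_{23}(\theta)^{3/2}\le r_2D_{13}(\theta)^{3/2}$, hence, raising both sides to the power $2/3$, to $\Psi(\theta):=r_2^{2/3}D_{13}(\theta)-r_1^{2/3}D_{23}(\theta)\ge 0$. Now $\Psi$ is affine in $\cos\theta$, with coefficient of $\cos\theta$ equal to $2r_3r_1^{2/3}r_2^{2/3}(r_2^{1/3}-r_1^{1/3})>0$; hence $\Psi$ attains its minimum on $[0,\pi]$ at $\theta=\pi$, where $\Psi(\pi)=r_2^{2/3}(r_1+r_3)^2-r_1^{2/3}(r_2+r_3)^2$. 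Since this is the difference of squares $[r_2^{1/3}(r_1+r_3)]^2-[r_1^{1/3}(r_2+r_3)]^2$ and $r_2^{1/3}(r_1+r_3)-r_1^{1/3}(r_2+r_3)=(r_2^{1/3}-r_1^{1/3})\bigl(r_3-(r_1r_2)^{1/3}(r_1^{1/3}+r_2^{1/3})\bigr)$, one sees $\Psi(\pi)\ge 0$ exactly when $r_3\ge (r_1r_2)^{1/3}(r_1^{1/3}+r_2^{1/3})$. I would then deduce this from the standing hypothesis \eqref{phi1-condition}: discarding the nonnegative summand $r_1(r_3+r_2)^3$ gives $r_2(r_3-r_1)^3\ge r_3(r_1+r_2)^3>r_2(r_1+r_2)^3$ (since $r_3>r_2$), so $r_3-r_1>r_1+r_2$, i.e. $r_3>2r_1+r_2$; and with $a=r_1^{1/3}$, $b=r_2^{1/3}$, the identity $a^3+b^3-(a^2b+ab^2)=(a-b)^2(a+b)>0$ yields $(r_1r_2)^{1/3}(r_1^{1/3}+r_2^{1/3})=a^2b+ab^2<a^3+b^3=r_1+r_2<r_3$. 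Hence $\Psi(\pi)>0$ and $g_{13}(\theta)<g_{23}(\theta)$ on $(0,\pi)$.

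The hard part is the second inequality. Unlike the first, $g_{13}\le g_{23}$ genuinely fails without \eqref{phi1-condition} — for instance when the three radii lie close together — so the argument must use the cubic condition; the decisive simplification is the affineness of $\Psi$ in $\cos\theta$, which collapses the whole question to the single algebraic inequality $r_3\ge (r_1r_2)^{1/3}(r_1^{1/3}+r_2^{1/3})$, and this then follows from \eqref{phi1-condition} through the crude but sufficient estimate $r_3>2r_1+r_2$. The same ``clear denominators / affine in $\cos\theta$ / evaluate at an endpoint'' scheme also handles $g_{13}\le g_{12}$ (worst case $\theta=0$, reducing to the unconditional inequality $r_2^{1/3}(r_3-r_1)\ge r_3^{1/3}(r_2-r_1)$), so one may prefer a uniform treatment; I have isolated the monotonicity argument above only because it is the quickest route to the first inequality.
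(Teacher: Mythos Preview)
Your proof is correct, and it is genuinely different from the paper's argument. The paper proceeds indirectly: for each pair $(g_{13},g_{1j})$ (resp.\ $(g_{13},g_{23})$) it first checks the endpoint-derivative inequalities $g_{13}'(0)\le g_{12}'(0)$ and $g_{13}'(\pi)\ge g_{12}'(\pi)$ (both implied by \eqref{phi1-condition}), then uses that the equation $g_{13}(\theta)=g_{12}(\theta)$ is, after raising to the $2/3$ power, affine in $\cos\theta$ and hence has at most one solution in $(0,\pi)$; combined with vanishing at the endpoints and the derivative ordering there, this forces $g_{13}\le g_{12}$ throughout. Your approach exploits the same affine-in-$\cos\theta$ structure but in a more direct way: rather than counting intersections you simply minimize the affine function $\Psi$ over $[0,\pi]$ and check the single worst endpoint. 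For the first inequality you bypass \eqref{phi1-condition} entirely via the monotonicity of $u\mapsto u(1+u^2-2u\cos\theta)^{-3/2}$ on $[1,\infty)$, which is a nice unconditional argument the paper does not isolate. For the second inequality your reduction to $r_3\ge (r_1r_2)^{1/3}(r_1^{1/3}+r_2^{1/3})$ and the crude consequence $r_3>2r_1+r_2$ of \eqref{phi1-condition} is cleaner than the paper's derivative comparison plus intersection count. The trade-off is that the paper's method is uniform across both inequalities, while your two cases are treated by slightly different mechanisms (though, as you note in the last paragraph, the $\Psi$-minimization scheme covers both if one prefers symmetry).
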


\begin{proof}
	We claim that $0 \leq g_{13}'(0) \leq g_{12}'(0)$ and $g_{13}'(\pi) \geq g_{12}'(\pi) \geq 0$. Indeed, using \eqref{F-values},
	\[ g_{13}'(0) = -F_{13}''(0) = \frac{r_1r_3}{(r_3-r_1)^3} \geq 0, \quad \text{and} \quad g_{12}'(0) = \frac{r_1r_2}{(r_2-r_1)^3}, \]
	thus
	\[ g_{13}'(0) \leq g_{12}'(0) \iff r_3(r_2 - r_1)^3 \leq r_2(r_3 - r_1)^3 \]
	which is weaker than \eqref{phi1-condition}.
	
	On the other hand,
	\[ g_{13}'(\pi) = -F_{13}''(\pi) = -\frac{r_1r_3}{(r_3+r_1)^3} \leq 0, \quad \text{and} \quad g_{12}'(\pi) = -\frac{r_1r_2}{(r_1+r_2)^3}, \]
	thus
	\[ g_{13}'(\pi) \geq g_{12}'(\pi) \iff r_3(r_1+r_2)^3 \leq r_2(r_3+r_1)^3 \]
	which is once again weaker than \eqref{phi1-condition}.
	
	Moreover, the equation $g_{13}(\theta) = g_{12}(\theta)$ has at most one solution in $(0, \pi)$, since we have the following chain of equivalent equalities:
	\begin{eqnarray*}
		g_{13}(\theta) &=& g_{12}(\theta) \\
		\frac{r_1r_3}{D_{13}(\theta)^{3/2}} &=& \frac{r_1r_2}{D_{12}(\theta)^{3/2}} \\
		r_3^{2/3}(r_1^2 + r_2^2 - 2r_2r_3\cos \theta) &=& r_2^{2/3}(r_1^2 + r_3^2 - 2r_1r_3\cos \theta)
	\end{eqnarray*}
	\[  \cos \theta = \frac{r_2^{2/3}r_3^{2/3}(r_3^{4/3}-r_2^{4/3}) - r_1^2(r_3^{2/3} - r_2^{2/3})}{2r_1r_2^{2/3}r_3^{2/3}(r_3^{1/3} - r_2^{1/3})}. \]
	Recalling that both $g_{13}$ and $g_{12}$ vanish at the endpoints of $[0,\pi]$, we get the thesis. An analogous argument applies to the comparison between $g_{13}$ and $g_{23}$.
\end{proof}
\begin{rmr}
	It follows from the Lemma above that for every value of $g_{13}$, and so for every fixed $\beta$, there exists at least one $\alpha$ where $g_{12}(\alpha)$ takes the same value. If the value of $g_{13}$ is not the maximal one then there are exactly two different $\alpha$'s such that the value is achieved. The same holds for $g_{23}(\alpha-\beta)$. See figure below.
\end{rmr}

\begin{figure}
\begin{tikzpicture}
	\node[anchor=south west,inner sep=0] (image) at (0,0) {\includegraphics[width=0.9\textwidth]{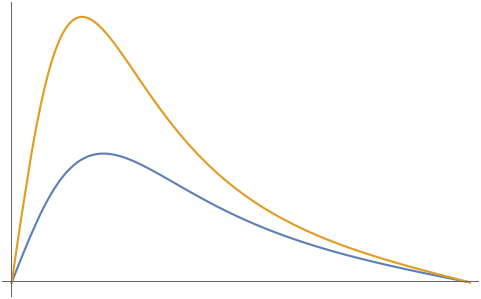}};
	\begin{scope}[x={(image.south east)},y={(image.north west)}]
	\node at (1,0) {$\beta$};
	\node at (0,1) {$\alpha$};
	\node at (0.35,0.75) {$g_{12}$};
	\node at (0.3,0.35) {$g_{13}$};
	\end{scope}
\end{tikzpicture}
\caption{The relative position of the graps of $g_{12}$ and $g_{13}$ on the interval $[0,\pi]$. However the strict inequality betwen the two maximal values is not proved. See \autoref{lemma-gij}} \label{plotg}
\end{figure}

\begin{lmm} \label{lemma-h} If $\cos \theta \in (\cos \theta_{ij}, 1)$ then
	\[ g_{ij}'(\theta) < g_{ij}'(0) \frac{\cos \theta - \cos \theta_{ij}}{1 - \cos \theta_{ij}}; \]
	if $\cos \theta \in (-1, \cos \theta_{ij})$ then
	\[ g_{ij}'(\theta) < g_{ij}'(\pi) \frac{\cos\theta - \cos\theta_{ij}}{-1 - \cos \theta_{ij}}. \]
\end{lmm}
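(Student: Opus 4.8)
The plan is to recast both inequalities as the assertion that a single smooth function of the variable $t=\cos\theta$ lies strictly below a chord. Since $r_i^2+r_j^2-2r_ir_jt\ge(r_i-r_j)^2>0$ on $[-1,1]$ (we are in the case $r_i\ne r_j$, so $a:=r_i^2+r_j^2>2r_ir_j=:b$), the function
\[
\psi(t)=\frac{r_ir_j\,Q_{ij}(t)}{(r_i^2+r_j^2-2r_ir_jt)^{5/2}},\qquad t\in[-1,1],
\]
is $C^\infty$ and satisfies $g_{ij}'(\theta)=\psi(\cos\theta)$. By the definition of $\theta_{ij}$ one has $\psi(\cos\theta_{ij})=0$, while $\psi(1)=g_{ij}'(0)$ and $\psi(-1)=g_{ij}'(\pi)$. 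Thus the first inequality says precisely that on $(\cos\theta_{ij},1)$ the graph of $\psi$ lies strictly below the chord through $(\cos\theta_{ij},0)$ and $(1,\psi(1))$, and the second that on $(-1,\cos\theta_{ij})$ it lies strictly below the chord through $(-1,\psi(-1))$ and $(\cos\theta_{ij},0)$. Hence (piecewise) convexity of $\psi$ is the natural tool.

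The first step is to compute the second derivative. A direct differentiation gives
\[
\psi''(t)=\frac{r_ir_j\,b}{8}\,(a-bt)^{-9/2}\,P_2(t),\qquad P_2(t)=3b^2t^2+54abt+48a^2-105b^2.
\]
On $[-1,1]$ the parabola $P_2$ is increasing (its vertex is at $-9a/b<-1$) and $P_2(1)=6(8a+17b)(a-b)>0$ since $a>b$; hence there is $t^\ast<1$ (possibly $t^\ast\le-1$) with $\psi$ concave on $[-1,t^\ast]\cap[-1,1]$ and strictly convex on $(t^\ast,1]\cap[-1,1]$. Using the explicit values $t^\ast=\bigl(-9a+\sqrt{65a^2+35b^2}\bigr)/b$ and $\cos\theta_{ij}=\bigl(-a+\sqrt{a^2+3b^2}\bigr)/b$, the inequality $t^\ast<\cos\theta_{ij}$ is equivalent, after squaring once, to $2b^2<a\sqrt{a^2+3b^2}$, which holds because $a>b$. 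Therefore $\psi$ is strictly convex on $[\cos\theta_{ij},1]$, and the first inequality follows immediately from the chord estimate for strictly convex functions.

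For the second inequality I would distinguish two cases. If $t^\ast\le-1$, then $\psi$ is strictly convex on all of $[-1,\cos\theta_{ij}]$ and the chord estimate again concludes. The substantial case is $t^\ast\in(-1,\cos\theta_{ij})$, where $\psi$ is concave on $[-1,t^\ast]$ and convex on $[t^\ast,\cos\theta_{ij}]$, so convexity alone is not enough. Here I would let $C$ be the chord of $\psi$ over $[-1,\cos\theta_{ij}]$ and set $\eta=C-\psi$, so that $\eta(-1)=\eta(\cos\theta_{ij})=0$ and $\eta''=-\psi''$; in particular $\eta'$ is strictly increasing on $[-1,t^\ast]$ and strictly decreasing on $[t^\ast,\cos\theta_{ij}]$. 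The decisive additional ingredient is that $\eta'(-1)\ge0$: granting it, $\eta'>0$ on $(-1,t^\ast]$, so Rolle's theorem forces $\eta'$ to have a (necessarily unique) zero $\xi$ in $(t^\ast,\cos\theta_{ij})$; then $\eta'>0$ on $(-1,\xi)$ and $\eta'<0$ on $(\xi,\cos\theta_{ij})$, so $\eta$ increases and then decreases on $[-1,\cos\theta_{ij}]$ and, vanishing at both endpoints, is strictly positive on $(-1,\cos\theta_{ij})$ --- which is the claim.

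It remains to verify $\eta'(-1)\ge0$, and this is the step I expect to be the real obstacle. Using $\psi(-1)=-r_ir_j/(r_i+r_j)^3$, $\psi'(-1)=r_ir_j(2a-7b)/\bigl(2(r_i+r_j)^5\bigr)$ and $\cos\theta_{ij}+1=2(a+b)/\bigl(\sqrt{a^2+3b^2}+a-b\bigr)$ (the last obtained by rationalizing the numerator), one finds
\[
\eta'(-1)=\frac{-\psi(-1)}{\cos\theta_{ij}+1}-\psi'(-1)=\frac{r_ir_j}{2(r_i+r_j)^5}\bigl(\sqrt{a^2+3b^2}-a+6b\bigr),
\]
so that $\eta'(-1)\ge0$ is equivalent to $\sqrt{a^2+3b^2}\ge a-6b$; this is trivial when $a\le6b$, and when $a>6b$ it follows by squaring from $12a\ge33b$. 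Apart from this reduction and keeping the sign analysis of $\eta'$ in the second case airtight, all remaining steps are routine calculus and algebra.
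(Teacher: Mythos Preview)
Your argument is correct. The setup is identical to the paper's: both write $g_{ij}'(\theta)=\psi(\cos\theta)$ (the paper calls this function $h$), locate the single inflection point $t^\ast$ of $\psi$ via the quadratic numerator of $\psi''$, and deduce the first inequality from strict convexity on $[\cos\theta_{ij},1]$. The difference lies in the treatment of the interval $[-1,\cos\theta_{ij}]$ when $t^\ast>-1$. The paper introduces the minimum point $\sigma$ of $\psi$, proves the algebraic inequality $t^\ast<\sigma$ (reducing, after some manipulation, to $5(1-u)^2>0$ with $u=r_i^2/r_j^2$), and then argues in two pieces: on $[-1,\sigma]$ it uses monotonicity together with $\psi(-1)<0$ to bound $\psi$ by the constant $\psi(-1)$, which already lies below the chord; on $[\sigma,\cos\theta_{ij}]$ it uses convexity and a geometric comparison of two chords through $(\cos\theta_{ij},0)$. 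Your route bypasses $\sigma$ entirely: you study $\eta=C-\psi$ directly, exploit the unimodal shape of $\eta'$ forced by the single inflection, and reduce everything to the endpoint condition $\eta'(-1)\ge 0$, which you verify by an elementary computation (ultimately $12a\ge 33b$ when $a>6b$). Both approaches hinge on one algebraic inequality of comparable difficulty; yours is arguably a bit more streamlined, since it needs only the weaker fact $t^\ast<\cos\theta_{ij}$ for the first part and avoids the chord-comparison step, while the paper's single inequality $t^\ast<\sigma$ does double duty for both parts.
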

\begin{proof} We omit for simplicity of notation the indices $ij$. Recall that \[ g'(\theta) = \frac{r_ir_j Q_{ij}(\cos \theta)}{(r_i^2+r_j^2-2r_ir_j\cos \theta)^{5/2}} = h(\cos \theta), \] where $h \colon [-1,1] \to \R$, $h(t) = \frac{r_ir_j Q_{ij}(t)}{(r_i^2+r_j^2-2r_ir_jt)^{5/2}}$.
	
The thesis is a weak version of the convexity of $h$: if $h$ is convex, then the inequalities hold by applying the Jensen's inequality separately in the intervals $[-1, \cos \theta_{ij}]$ and $[\cos \theta_{ij}, 1]$. It could happen, however, that $h$ has a concave part between $-1$ and a certain threshold $\xi$, and then it is convex. In this case we prove the following:
	\begin{itemize}
		\item $h_{ij}$ is decreasing between $-1$ and a certain threshold $\sigma$, where it reaches the minimum;
		\item $\xi < \sigma$, i.e., in the interval $[\sigma, 1]$ the function is convex.
	\end{itemize}
	Then we deduce that, for $-1 \leq t \leq \sigma$,
	\[ h_{ij}(t) \leq h_{ij}(-1) \leq h_{ij}(-1) \frac{t - \cos \theta_{ij}}{-1 - \cos \theta_{ij}} \]
	(recall that $h(-1)$ is negative).
	
	On the other hand, for $\sigma \leq t \leq \cos \theta_{ij}$,
	\begin{eqnarray*}
		h(t) &\leq& \text{line joining $(\sigma, h(\sigma))$ and $(\cos \theta_{ij}, 0)$} \\
		&\leq& \text{line joining $(-1, h(-1))$ and $(\cos \theta_{ij}, 0)$}
	\end{eqnarray*}
	since $\sigma$ is a minimum point. See \autoref{ploth} for a more clear graphical meaning of the proof.
	
	\begin{figure}
		\begin{tikzpicture}
		\node[anchor=south west,inner sep=0] (image) at (0,0) {\includegraphics[width=0.9\textwidth]{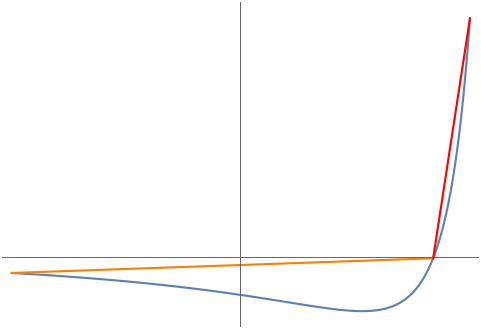}};
		\begin{scope}[x={(image.south east)},y={(image.north west)}]
		\node at (0.95,0.18) {$\cos \theta_{ij}$};
		\node at (0.4,0.08) {$h(t)$};
		\end{scope}
		\end{tikzpicture}
		\caption{A graphical understanding of \autoref{lemma-h}: the function $h(t)$ stays
			below two segments.}\label{ploth}
	\end{figure}

	Here come the computations:
	\[ h'(t) = \frac{r_i^2r_j^2t^2 + 5r_ir_j(r_i^2+r_j^2)t + r_i^4 - 13r_i^2r_j^2 + r_j^4}{(r_i^2+r_j^2-2r_ir_jt)^{7/2}}. \]
	We have that $h'(t) = 0$ for 
	\[ t = \frac{-5(r_i^2+r_j^2) \pm \sqrt{21r_i^4 + 102r_i^2r_j^2 + 21r_i^4}}{2r_ir_j}. \]
	Observe that the smaller solution is always outside the interval $[-1,1]$, since
	\[ -5(r_i^2+r_j^2) - \sqrt{21r_i^4 + 102r_i^2r_j^2 + 21r_i^4} < -\sqrt{102}r_ir_j < -2r_ir_j. \]
	Denote by $\sigma$ the bigger root.
	
	We move on to the second derivative:
	\[ h''(t) = 3r_ir_j\frac{r_i^2r_j^2t^2 + 9r_ir_j(r_i^2 + r_j^2)t + 4r_i^4 -27r_i^2r_j^2 + 4r_j^4}{(r_i^2 + r_j^2 - 2r_ir_jt)^{9/2}}. \]
	We have that $h''(t) = 0$ for
	\[ t = \frac{-9(r_i^2 + r_j^2) \pm \sqrt{65r_i^4 + 270r_i^2r_j^2 + 65r_j^4}}{2r_ir_j}. \]
	As above, the smaller root always lies outside the interval $[-1,1]$. Denote by $\xi$ the bigger root. Now we prove that $\sigma > \xi$ for any choice of the values $0 < r_i < r_j$. By homogeneity, denoting by $u = r_i^2/r_j^2 \in (0,1)$, it suffices to prove that
	\[ -5(1 + u) + \sqrt{21u^2 + 102u + 21} > -9(1+u) + \sqrt{65u^2 + 270u + 65}, \]
	\ie,
	\begin{ieee*}{C}
		4(1+u) + \sqrt{21u^2 + 102u + 21} > \sqrt{65u^2 + 270u + 65} \\
		37u^2 + 134u + 37u + 8(1+u)\sqrt{21u^2 + 102u + 21} > 65u^2 + 270u + 65 \\
		8(1+u)\sqrt{3}\sqrt{7u^2 + 34u + 7} > 28u^2 + 136u + 28 \\
		2\sqrt{3}(1+u) > \sqrt{7u^2 + 34u + 7} \\
		12(1+u)^2 > 7u^2 + 34u + 7 \\
		5(1-u)^2 > 0,
	\end{ieee*}
	as wanted.
\end{proof}

Now the idea is the following: in view of \autoref{lemma-gij}, the first equation of \eqref{stationary-system} implicitly defines two $C^\infty$ functions $\alpha_0(\beta)$ and $\alpha_\pi(\beta)$ such that $\alpha_0(0) = 0, \alpha_\pi(0) = \pi$. Analogously, the second equation implicitly defines two functions $\widehat{\alpha}_0(\beta)$ and $\widehat{\alpha}_\pi(\beta)$ such that $\widehat{\alpha}_0(0) = 0, \widehat{\alpha}_\pi(0) = \pi$.

	We want to prove that each curve $\alpha_{0,\pi}$ intersects each curve $\widehat{\alpha}_{0,\pi}$ only in 0 or $\pi$. 	
	By sign considerations, we notice that the first equation implies $\alpha(\beta) \in [\pi, 2\pi]$ and the second equation implies $\widehat{\alpha}(\beta) \in [\beta, \pi + \beta]$. Hence, the possible solutions lie in the region $\pi \leq \alpha \leq \pi + \beta$, and when considering the whole torus $\mathbb{T}^2$ the region has a ``butterfly'' shape.
	
	\begin{figure}
		\begin{tikzpicture}
		\node[anchor=south west,inner sep=0] (image) at (0,0) {\includegraphics[width=0.9\textwidth]{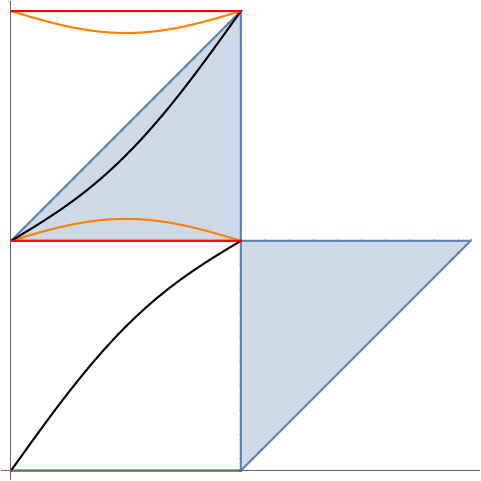}};
		\begin{scope}[x={(image.south east)},y={(image.north west)}]
		\node at (0.25,0.25) {$\hat\alpha_0$};
		\node at (0.4,0.55) {$\alpha_\pi$};
		\node at (0.4,0.75) {$\hat\alpha_\pi$};
		\node at (0.1,0.92) {$\alpha_0$};
		\node at (0,0) {$0$};
		\node at (0.5,0) {$\pi$};
		\node at (1,0) {$2\pi$};
		\node at (0,0.5) {$\pi$};
		\node at (0,1) {$2\pi$};
		\end{scope}
		\end{tikzpicture}
		\caption{In blue, the ``butterfly'' region of admissible solutions to optimality conditions \eqref{stationary-system}. In black and orange, a plot of the curves
		$\alpha_{0,\pi}$ and $\hat\alpha_{0,\pi}$ in the region $0\leq \beta \leq \pi$. } \label{butterfly}
	\end{figure}
	
	This already shows that the curves $\alpha_0(\beta)$ and $\widehat{\alpha}_0(\beta)$ do not produce solutions, since we have that $\beta - \pi \leq \alpha_0(\beta) \leq 0$ and $0 \leq \widehat{\alpha}_0(\beta) \leq \pi$. Thus we can concentrate our attention on the curves $\alpha_\pi$ and $\widehat{\alpha}_\pi$. 
	
	The key observation lies in the fact that 
	\[ \pi \leq \alpha_\pi(\beta) \leq \pi + \alpha_\pi'(0)\beta, \]
	i.e., the function $\alpha_\pi(\beta)$ stays below its tangent line at $\beta = 0$ (see \autoref{tangents}). Likewise, the function $\widehat{\alpha}_\pi(\beta)$ stays above its tangent line at $\beta = 0$. This allows us to conclude that they do not intersect since, as we will see, the condition \eqref{phi1-condition} is equivalent to $\alpha_\pi'(0) \leq \widehat{\alpha}_\pi'(0)$.
	
	\begin{figure}
		\begin{tikzpicture}
		\node[anchor=south west,inner sep=0] (image) at (0,0) {\includegraphics[width=0.9\textwidth]{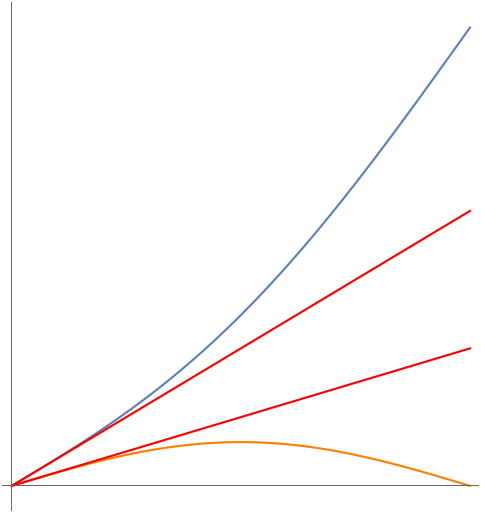}};
		\begin{scope}[x={(image.south east)},y={(image.north west)}]
		\node at (0.5,0.1) {$\alpha_\pi(\beta)$};
		\node at (0.85,0.25) {$\pi + \alpha'_\pi(0)\beta$};
		\node at (0.85,0.45) {$\pi + \hat\alpha'_\pi(0)\beta$};
		\node at (0.5,0.45) {$\hat\alpha_\pi(\beta)$};
		\node at (0.05,0) {$0$};
		\node at (0,0.1) {$\pi$};
		\node at (1,0) {$\pi$};
		\node at (0,1) {$2\pi$};
		\end{scope}
		\end{tikzpicture}
		\caption{ A graphical understanding of \autoref{lemma-alpha}: the function $\alpha_\pi(\beta)$ is confined by $\pi \leq \alpha_\pi(\beta) \leq \pi + \alpha_\pi'(0)\beta$, and similarly $\pi + \widehat{\alpha}_\pi'(0)\beta \leq \widehat{\alpha}_\pi(\beta) \leq \pi + \beta$. This implies that the intersection between $\alpha_\pi$ and $\widehat{\alpha}_\pi$ is only at $\beta = 0$.} \label{tangents}
	\end{figure}
	
	\begin{lmm} \label{lemma-alpha}
		For $\beta \in (0,\pi)$ let $\alpha(\beta)$ be the solution of
		\[ \begin{cases}
		g_{13}(\beta) + g_{ij}(\alpha) = 0 \\
		\alpha(0) = \alpha(\pi) = \pi.
		\end{cases} \]
		
		Then
		\[ \pi \leq \alpha(\beta) < \pi + \alpha'(0)\beta. \]
(See \autoref{tangents} for a graphical understanding.)
	\end{lmm}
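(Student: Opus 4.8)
The plan is to sidestep the implicit curve $\alpha(\beta)$ and work instead with the \emph{explicit} comparison function
\[
\Phi(\beta) \eqdef g_{ij}\bigl(\pi + m\beta\bigr) + g_{13}(\beta), \qquad m \eqdef \alpha'(0) = -\frac{g_{13}'(0)}{g_{ij}'(\pi)},
\]
and to prove that $\Phi(\beta) < 0$ for $\beta \in (0,\pi]$. Indeed, by Lemma \ref{lemma-gij} the branch $\alpha(\beta)$ through $\pi$ stays in $[\pi, 2\pi - \theta_{ij}]$, where $g_{ij}$ is strictly decreasing (it is the reflection $g_{ij}(2\pi-\theta)=-g_{ij}(\theta)$ of the decreasing piece on $[\theta_{ij},\pi]$); since $\pi+m\beta$ also lies there and $g_{ij}(\alpha(\beta)) = -g_{13}(\beta)$, the inequality $\Phi(\beta)<0$ is literally $g_{ij}(\pi+m\beta) < g_{ij}(\alpha(\beta))$, hence $\alpha(\beta) < \pi + m\beta$. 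The lower bound $\alpha(\beta)\ge\pi$ is immediate: for $\beta\in(0,\pi)$ one has $g_{13}(\beta)>0$, so $g_{ij}(\alpha(\beta)) = -g_{13}(\beta) < 0$, i.e.\ $\sin\alpha(\beta)<0$, which is exactly the statement that the branch stays in $[\pi,2\pi-\theta_{ij}]$.

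Before attacking $\Phi$ I would record two short computations. Differentiating the defining relation at $\beta=0$ and using \eqref{F-values} gives $m = \frac{r_1r_3(r_i+r_j)^3}{r_ir_j(r_3-r_1)^3}>0$; the crucial fact is that \eqref{phi1-condition} forces $m<1$. For $ij=12$, \eqref{phi1-condition} reads $r_2(r_3-r_1)^3 \ge r_1(r_3+r_2)^3 + r_3(r_1+r_2)^3 > r_3(r_1+r_2)^3$, which is precisely $m<1$; for $ij=23$ one uses instead $r_2(r_3-r_1)^3 > r_1(r_2+r_3)^3$, again a consequence of \eqref{phi1-condition}. Also, by the very definition of $m$ one has the exact cancellation $m\,g_{ij}'(\pi) = -g_{13}'(0)$, so $\Phi(0)=0$ and $\Phi'(0) = m\,g_{ij}'(\pi) + g_{13}'(0) = 0$.

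The heart of the matter is the claim $\Phi'(\beta)<0$ for every $\beta\in(0,\pi)$ with $\pi+m\beta < 2\pi-\theta_{ij}$ (the remaining $\beta$, if any, satisfy $\pi+m\beta > 2\pi-\theta_{ij}\ge\alpha(\beta)$ trivially, with strictness; and the single endpoint $\pi+m\beta=2\pi-\theta_{ij}$ is recovered by continuity together with the fact that $g_{ij}(2\pi-\theta_{ij})$ is the global minimum of $g_{ij}$). On this range $\pi+m\beta$ lies in the decreasing part $(\pi,2\pi-\theta_{ij})$ of $g_{ij}$, so $g_{ij}'(\pi+m\beta)<0$; hence for $\beta\ge\theta_{13}$, where also $g_{13}'(\beta)\le 0$, the bound $\Phi'(\beta) = m\,g_{ij}'(\pi+m\beta) + g_{13}'(\beta) < 0$ is immediate. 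For $\beta\in(0,\theta_{13})$ I would invoke Lemma \ref{lemma-h} twice — for $g_{13}$ at $\theta=\beta$ (so $\cos\beta\in(\cos\theta_{13},1)$) and for $g_{ij}$ at $\theta=\pi+m\beta$ (so $\cos(\pi+m\beta)=-\cos(m\beta)\in(-1,\cos\theta_{ij})$) — obtaining
\[
g_{13}'(\beta) < g_{13}'(0)\,\frac{\cos\beta-\cos\theta_{13}}{1-\cos\theta_{13}}, \qquad g_{ij}'(\pi+m\beta) < g_{ij}'(\pi)\,\frac{\cos(m\beta)+\cos\theta_{ij}}{1+\cos\theta_{ij}}.
\]
Multiplying the second inequality by $m>0$, adding, and using $m\,g_{ij}'(\pi)=-g_{13}'(0)$ gives
\[
\Phi'(\beta) < g_{13}'(0)\left(\frac{\cos\beta-\cos\theta_{13}}{1-\cos\theta_{13}} - \frac{\cos(m\beta)+\cos\theta_{ij}}{1+\cos\theta_{ij}}\right).
\]
Since $m<1$ yields $\cos(m\beta)\ge\cos\beta$, the parenthesis is at most $\frac{\cos\beta-\cos\theta_{13}}{1-\cos\theta_{13}} - \frac{\cos\beta+\cos\theta_{ij}}{1+\cos\theta_{ij}}$, whose numerator after clearing the positive denominators is $(\cos\beta-1)(\cos\theta_{ij}+\cos\theta_{13})\le 0$, because $\cos\beta\le 1$ and $\cos\theta_{ij},\cos\theta_{13}\in(0,1)$. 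Thus $\Phi'(\beta)<0$, and integrating from $0$ finishes the proof by the translation described in the first paragraph.

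I expect the genuinely delicate point to be exactly this passage inside the case $\beta\in(0,\theta_{13})$: combining the two one‑sided estimates of Lemma \ref{lemma-h} into a sign statement for $\Phi'$ works only because of the exact identity $m\,g_{ij}'(\pi)=-g_{13}'(0)$, because \eqref{phi1-condition} upgrades $m>0$ to $m<1$ (so that $\cos(m\beta)\ge\cos\beta$), and because $\theta_{ij},\theta_{13}$ are interior critical points with cosines strictly between $0$ and $1$. Everything else — the branch structure of $g_{ij}$ from Lemma \ref{lemma-gij}, the monotonicity on $[\pi,2\pi-\theta_{ij}]$, and the boundary bookkeeping near $\pi+m\beta=2\pi-\theta_{ij}$ — is routine given Lemmas \ref{lemma-gij} and \ref{lemma-h}.
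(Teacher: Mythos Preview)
Your proof is correct, and it follows the paper's strategy closely: split at $\beta=\theta_{13}$, handle $\beta\ge\theta_{13}$ by sign, and for $\beta\in(0,\theta_{13})$ apply Lemma~\ref{lemma-h} to both terms and reduce to the elementary inequality whose numerator is $(\cos\beta-1)(\cos\theta_{ij}+\cos\theta_{13})\le0$. The algebraic heart is identical to the paper's.

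The one genuine difference is in packaging. The paper differentiates the implicit relation to obtain $\alpha'(\beta)=g_{13}'(\beta)/(-g_{ij}'(\alpha(\beta)))$, applies Lemma~\ref{lemma-h} with $\theta=\alpha(\beta)$, and then needs the auxiliary inequality $\tilde\alpha:=\alpha(\beta)-\pi\le\beta$ to close the estimate; this last inequality is asserted rather than proved and, strictly speaking, requires a short bootstrap (it is a \emph{consequence} of the lemma via $m<1$). You instead work with the explicit comparison function $\Phi(\beta)=g_{ij}(\pi+m\beta)+g_{13}(\beta)$ and apply Lemma~\ref{lemma-h} at $\theta=\pi+m\beta$; the role of $\tilde\alpha\le\beta$ is then taken over by the direct fact $m<1$, which you extract cleanly from \eqref{phi1-condition}. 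This sidesteps the circularity and makes the boundary discussion near $\pi+m\beta=2\pi-\theta_{ij}$ transparent (your continuity argument there is fine: $\Phi$ is strictly decreasing up to that point, so $\Phi<0$ there, hence $g_{ij}(\alpha(\beta))>-\max g_{ij}$ and $\alpha(\beta)<2\pi-\theta_{ij}$). In short: same engine, slightly cleaner chassis.
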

	
	\begin{proof} Differentiating in $\beta$ we get
		\[ g_{13}'(\beta) + \alpha'(\beta) g_{ij}'(\alpha(\beta)) = 0 \implies \alpha'(\beta) = \frac{g_{13}'(\beta)}{-g_{ij}'(\alpha(\beta))}. \]
		
		Take $\beta \in (0, \theta_{13})$, where $\theta_{13}$ is the critical value of $g_{13}$, so that $\cos \beta > \cos \theta_{13}$. By \autoref{lemma-gij} we have that $\alpha \in [\pi, 2\pi - \theta_{ij}]$, because the equation $g_{13}(\beta) + g_{ij}(\alpha) = 0$ has two solutions in the interval $[\pi, 2\pi]$ and by definition $\alpha$ is the leftmost one. Using \autoref{lemma-h} we have
		\[ \alpha'(\beta) \leq \frac{g_{13}'(0)}{-g_{ij}'(\pi)} \frac{\cos \beta - \cos \theta_{13}}{1 - \cos \theta_{13}} \frac{-1-\cos \theta_{ij}}{\cos \alpha(\beta) - \cos \theta_{ij}}. \]
		
		Since $\frac{g_{13}'(0)}{-g_{ij}'(\pi)} = \alpha'(0) \geq 0$, it suffices to show that
		\[ \frac{\cos \beta - \cos \theta_{13}}{1 - \cos \theta_{13}} \frac{-1-\cos \theta_{ij}}{\cos \alpha(\beta) - \cos \theta_{ij}} \leq 1. \]
		Let $\tilde{\alpha} = \alpha(\beta) - \pi$, so that $0 \leq \tilde{\alpha} \leq \beta$. We must prove
		\begin{align*}
			&\frac{\cos \beta - \cos \theta_{13}}{1 - \cos \theta_{13}} \frac{1 + \cos \theta_{ij}}{\cos \tilde{\alpha} + \cos \theta_{ij}} \leq 1. \\
			&(1 + \cos \theta_{ij}) (\cos \beta - \cos \theta_{13}) \leq (1 - \cos \theta_{13})(\cos \tilde{\alpha} + \cos \theta_{ij}) \\
			&(\cos \beta - \cos \tilde{\alpha}) + \cos \theta_{ij} \cos \beta + \cos \theta_{13} \cos \tilde{\alpha} \leq \cos \theta_{ij} + \cos \theta_{13}.
		\end{align*}
		
		But this is true, since $\tilde{\alpha} \leq \beta \implies \cos \beta - \cos \tilde{\alpha} \geq 0$ and clearly
		\[ \cos \theta_{ij} \cos \beta + \cos \theta_{13} \cos \tilde{\alpha} \leq \cos \theta_{ij} + \cos \theta_{13}. \]
		
		We got the desired inequality for $\beta \in (0, \theta_{13})$. However, for $\beta \geq \theta_{13}$ we have $\alpha'(\beta) \leq 0$, hence the line $\alpha'(0)\beta$ is increasing and the function $\alpha(\beta)$ is decreasing, giving the inequality for every $\beta$.
	\end{proof}

	By \autoref{lemma-alpha}, we obtain that the function $\alpha_\pi(\beta)$ lies between the horizontal line $\alpha = \pi$ and the line $\alpha = \pi + \alpha_\pi'(0)\beta$ (strictly for $\beta > 0$). Recall that the function $\widehat{\alpha}_\pi(\beta)$ satisfies the second equation of the stationary system (\eqref{stationary-system})
	\[ \frac{r_1r_3\sin \beta}{D_{13}(\beta)^{3/2}} - \frac{r_2r_3\sin (\widehat{\alpha} - \beta)}{D_{23}(\widehat\alpha - \beta)^{3/2}} = 0 \]
	with $\widehat{\alpha}_\pi(0) = \pi$, $\widehat{\alpha}_\pi(\pi) = 2\pi$.
	
	By a change of variables $\tilde{\alpha}(\beta) = 2\pi + \beta - \widehat{\alpha}_\pi(\beta)$, we get that $\tilde{\alpha}$ satisfies
	\[ \begin{cases}
	g_{13}(\beta) + g_{23}(\tilde{\alpha}) = 0 \\
	\tilde{\alpha}(0) = \tilde{\alpha}(\pi) = 0,
	\end{cases}
	\]
	hence $\pi < \tilde{\alpha}(\beta) < \pi + \tilde{\alpha}'(0)\beta$, i.e.,
	\[ \pi + \widehat{\alpha}_\pi'(0)\beta < \widehat{\alpha}_\pi(\beta) < \pi + \beta \]
	for $\beta > 0$.
	So the idea is that the two lines provide a separation of the curves, so that no intersection can happen except at the starting point.
	
	We conclude by observing that the condition \eqref{phi1-condition} is equivalent to $\widehat{\alpha}_\pi'(0) \geq \alpha_\pi'(0)$: indeed we have
	\begin{align*}
		&g_{13}(\beta) - g_{23}(\widehat{\alpha}_\pi(\beta) - \beta) = 0 \implies \\
		&\widehat{\alpha}_\pi'(0) = 1 +  \frac{g_{13}'(0)}{g_{23}'(\pi)} = 1 -  \frac{r_1r_3}{(r_3-r_1)^3} \frac{(r_2+r_3)^3}{r_2r_3} = \frac{r_2(r_3-r_1)^3 - r_1(r_2+r_3)^3}{r_2(r_3-r_1^3)}
	\end{align*}
	and
	\[ \alpha_\pi'(0) = \frac{g_{13}'(0)}{-g_{12}'(\pi)} = \frac{r_1r_3}{(r_3-r_1)^3} \frac{(r_1+r_2)^3}{r_1r_2} = \frac{r_3(r_1+r_2)^3}{r_2(r_3-r_1)^3}. \]
\end{proof}

\section{Consequences of \autoref{thm-phi1}}

When $\rho$ satisfies the assumptions of \autoref{thm-phi1}, we know that
\[ c(r_1, r_2, r_3) = \frac{1}{r_2+r_1} + \frac{1}{r_3+r_2} + \frac{1}{r_3-r_1} \]
for $\rho$-a.e. $(r_1, r_2, r_3) \in [0, s_1] \times [s_1, s_2] \times [s_2, +\infty)$. The key observation lies in the fact that this can be viewed as a 1-dimensional Coulomb cost for points $-r_2, r_1, r_3 \in \R$. We can now rely on a somewhat well-established theory for the Coulomb cost in dimension $d = 1$.

This allows to prove \autoref{monge-solution}.

\begin{proof}[Proof of \autoref{monge-solution}]
	This is a direct consequence of the one-dimensional result presented in \cite{colombo2013multimarginal}. Indeed, we can consider the absolutely continuous measure $\tilde{\rho} \in \mathcal{P}(\R)$ defined by
	\[ \tilde{\rho}(x) = \begin{cases} \rho(x) & x \in [0, s_1] \cup [s_2, +\infty) \\ \rho(-x) & x \in [-s_2, -s_1] \\ 0 & \text{otherwise} \end{cases} \]
	and observe that the DDI map $T$ for $\rho$ corresponds to the optimal increasing map $S$ defined in \cite{colombo2013multimarginal}.
	
	The optimality follows from the fact that $c(x, T(x), T^2(x)) = c_{1D}(y, S(y), S^2(y))$ for $\rho$-a.e. $x \in [0,s_1]$ and $\tilde{\rho}$-a.e. $y \in [-s_2, -s_1]$, where $c_{1D}$ is the Coulomb cost on the real line, as observed above.
\end{proof}

The idea of the first part of the  the proof of \autoref{thm-counterexample} is to show that on the support of the DDI map, the $c$-cyclical monotonicity is violated. We prepare a couple of technical results.

\begin{lmm} \label{M-eps-lemma1}
	Let $\frac{s_1}{s_2} > \frac{1+2\sqrt{3}}{5}$. Then there exist $\epsilon, M > 0$ such that
	\begin{equation} \label{M-eps-condition}
	\frac{2}{s_2+\epsilon} + \frac{1}{2s_2+\epsilon} + \frac{1}{2s_1+\epsilon} > \frac{\sqrt{3}}{s_1-\epsilon} + \frac{1}{s_1} + \frac{1}{M-\epsilon}.
	\end{equation}
\end{lmm}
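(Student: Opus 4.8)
The idea is that the claimed inequality is a small perturbation of its limiting form as $\epsilon \to 0^+$ and $M \to +\infty$, and that this limiting form is in fact \emph{equivalent} to the hypothesis $s_1/s_2 > \frac{1+2\sqrt 3}{5}$.

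First I would pass to the limit. Setting $\epsilon = 0$ and letting $M \to +\infty$, the right-hand side of \eqref{M-eps-condition} tends to $\frac{\sqrt 3}{s_1} + \frac{1}{s_1} = \frac{1+\sqrt 3}{s_1}$, while the left-hand side becomes $\frac{2}{s_2} + \frac{1}{2s_2} + \frac{1}{2s_1} = \frac{5}{2s_2} + \frac{1}{2s_1}$. Thus the limiting inequality reads
\[ \frac{5}{2s_2} + \frac{1}{2s_1} > \frac{1+\sqrt 3}{s_1}, \]
which, after isolating the $s_2$-term and clearing denominators, rearranges to $\frac{s_1}{s_2} > \frac{1+2\sqrt 3}{5}$ --- precisely the hypothesis, with strict inequality.

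Then I would run a routine continuity argument. Consider
\[ \Phi(\epsilon, \delta) = \frac{2}{s_2+\epsilon} + \frac{1}{2s_2+\epsilon} + \frac{1}{2s_1+\epsilon} - \frac{\sqrt 3}{s_1-\epsilon} - \frac{1}{s_1} - \delta, \]
which is continuous on $[0, s_1) \times [0, +\infty)$, all denominators being strictly positive there. By the previous step $\Phi(0,0) > 0$, so there is $\eta > 0$ such that $\Phi(\epsilon, \delta) > 0$ whenever $0 \le \epsilon < \eta$ and $0 \le \delta < \eta$. Fix any $\epsilon \in (0, \eta)$, and then choose $M > \epsilon$ large enough that $\frac{1}{M-\epsilon} < \eta$; this is possible since $\frac{1}{M-\epsilon} \to 0$ as $M \to +\infty$. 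Taking $\delta = \frac{1}{M-\epsilon}$ yields $\Phi(\epsilon, \delta) > 0$, which is exactly \eqref{M-eps-condition}.

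The only points needing care are the arithmetic in the limiting step --- verifying that the rearrangement involving $\sqrt 3$ is a genuine equivalence, not merely an implication --- and checking that the denominators $s_1 - \epsilon$ and $M - \epsilon$ stay positive for the chosen parameters. Neither is a real obstacle: the lemma is a quantitative continuity statement whose entire content is the equivalence established in the first step, so there is no genuinely hard step here.
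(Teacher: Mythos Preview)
Your proof is correct and follows essentially the same approach as the paper: first verify that the limiting inequality at $\epsilon=0$, $M=+\infty$ is equivalent to the hypothesis $s_1/s_2 > (1+2\sqrt{3})/5$, then invoke continuity to choose a small $\epsilon$ and finally a large $M$. The only difference is cosmetic --- you package the continuity step via an auxiliary function $\Phi(\epsilon,\delta)$, whereas the paper states it in two sentences.
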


\begin{proof}
	When $\epsilon = 0$ and $M = +\infty$, the inequality \eqref{M-eps-condition} reads
	\[ \frac{2}{s_2} + \frac{1}{2s_2} + \frac{1}{2s_1} > \frac{\sqrt{3}}{s_1} + \frac{1}{s_1}, \]
	which is equivalent to $\frac{s_1}{s_2} > \frac{2\sqrt{3}+1}{5}$.
	
	By continuity, there is a small $\epsilon$ such that
	\[ \frac{2}{s_2+\epsilon} + \frac{1}{2s_2+\epsilon} + \frac{1}{2s_1+\epsilon} > \frac{\sqrt{3}}{s_1-\epsilon} + \frac{1}{s_1}. \]
	
	Now choose $M$ big enough such that the desired inequality \eqref{M-eps-condition} holds.
\end{proof}

\begin{rmr}\label{functionfi}
We recall the polynomial condition \eqref{phi1-condition}:
\[ r_2(r_3-r_1)^3 -r_1(r_3+r_2)^3 - r_3(r_1+r_2)^3 \geq 0\,. \]
For fixed $r_1$ and $r_2$, the cubic polynomial in $r_3$ that appears on the left-hand side of \eqref{phi1-condition} has three real roots. They are given by the following expressions:
\[ -r_2, \quad \frac{5 r_1 r_2 + r_2^2 \pm (r_1+r_2) \sqrt{r_2^2 + 12r_1r_2 - 4r_1^2}}{2(r_2-r_1)}.	\]
Since we are only interested in the region where $r_3 > 0$ and since
\[ \frac{5 r_1 r_2 + r_2^2 + (r_1+r_2) \sqrt{r_2^2 + 12r_1r_2 - 4r_1^2}}{2(r_2-r_1)} \]
is the only positive root for every value of $0 < r_1 < r_2$, the condition \eqref{phi1-condition} can be rewritten as
\begin{equation} \label{phi1-root}
\varphi(r_1, r_2) := \frac{5 r_1 r_2 + r_2^2 + (r_1+r_2) \sqrt{r_2^2 + 12r_1r_2 - 4r_1^2}}{2(r_2-r_1)}\le r_3.
\end{equation}
\end{rmr}
\begin{rmr}\label{ColomboStra} 
In \cite{colombo2016counterexamples}, a crucial role was played by Lemma 4.1. In our framework this lemma can be obtained as a consequence of \autoref{thm-phi1} by choosing (following the notation of \cite{colombo2016counterexamples})  
\[r_3^->\max_{[r_1^-,r_1^+]\times [r_2^-,r_2^+]}\varphi(r_1,r_2)\,.\]
	If $r_1^+ <r_2^-$, as assumed by the authors in \cite{colombo2016counterexamples}, then the maximum above is a real number and the threshold $r_3^-$ can be fixed.
Thus our result gives a quantitative optimal version of their choice. 
Moreover, \autoref{thm-phi1} allows us to deal with the case in which there is no gap between 
$r_1^+$ and $r_2^-$, since we have an explicit control of the growth of $\varphi(r_1, r_2)$ as $r_1 \to r_2$.
\end{rmr}
Before the next lemma we introduce the notation $c_\pi(r_1,r_2,r_3)$ for the Coulomb cost of the configuration where all three points are placed along the $x$-axis so that the angle between $v_1$ and $v_2$ is $\pi$ and the angle between $v_1$ and $v_3$ is $0$; following our general line, the vector $v_1$ lies along the positive $x$-axis. In other words
\[c_\pi(r_1,r_2,r_3)=C(r_1,r_2,r_3,\pi,0)\,.\]
\begin{lmm} \label{M-eps-lemma2}
	Let $s_1,s_2,\eps$ and $M$ as in \autoref{M-eps-lemma1}, and let $(r_1,r_2,r_3) \in (0,\epsilon) \times (s_2-\epsilon,s_2) \times (s_2,s_2+\epsilon)$ and $(\ell_1,\ell_2,\ell_3) \in (s_1-\epsilon,s_1) \times (s_1,s_1+\epsilon) \times (M,+\infty)$. Suppose that the condition \eqref{phi1-root} is satisfied by both $(r_1,r_2,r_3)$ and $(\ell_1,\ell_2,\ell_3)$. Then
	\[ c(r_1,r_2,r_3) + c(\ell_1,\ell_2,\ell_3) > c(\ell_1,r_2,r_3) + c(r_1,\ell_2,\ell_3). \]
\end{lmm}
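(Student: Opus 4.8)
The plan is to reduce everything to the explicit ``aligned'' cost $c_\pi$ and then to the elementary inequality from Lemma~\ref{M-eps-lemma1}. First I would observe that, by hypothesis, the triples $(r_1,r_2,r_3)$ and $(\ell_1,\ell_2,\ell_3)$ both satisfy the polynomial condition \eqref{phi1-root}, so by Theorem~\ref{thm-phi1} we have $c(r_1,r_2,r_3)=c_\pi(r_1,r_2,r_3)$ and $c(\ell_1,\ell_2,\ell_3)=c_\pi(\ell_1,\ell_2,\ell_3)$, i.e. these two radial costs are given by the simple closed form $\frac{1}{r_1+r_2}+\frac{1}{r_2+r_3}+\frac{1}{r_3-r_1}$ and likewise for $\ell$. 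For the two mixed triples on the right-hand side I would only use the trivial upper bound $c\le c_\pi$ (the minimum over angles is at most the value at $(\pi,0)$), so that
\[
c(\ell_1,r_2,r_3)+c(r_1,\ell_2,\ell_3)\le c_\pi(\ell_1,r_2,r_3)+c_\pi(r_1,\ell_2,\ell_3).
\]
Hence it suffices to prove the purely algebraic inequality
\[
c_\pi(r_1,r_2,r_3)+c_\pi(\ell_1,\ell_2,\ell_3) > c_\pi(\ell_1,r_2,r_3)+c_\pi(r_1,\ell_2,\ell_3).
\]

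Next I would write out both sides using the closed form of $c_\pi$ and cancel the terms that appear on both sides. Writing $c_\pi(a,b,d)=\frac{1}{a+b}+\frac{1}{b+d}+\frac{1}{d-a}$, the terms $\frac{1}{r_2+r_3}$ and $\frac{1}{\ell_2+\ell_3}$ cancel, and what remains to be shown is
\[
\Big(\tfrac{1}{r_1+r_2}+\tfrac{1}{r_3-r_1}\Big)+\Big(\tfrac{1}{\ell_1+\ell_2}+\tfrac{1}{\ell_3-\ell_1}\Big)
> \Big(\tfrac{1}{\ell_1+r_2}+\tfrac{1}{r_3-\ell_1}\Big)+\Big(\tfrac{1}{r_1+\ell_2}+\tfrac{1}{\ell_3-r_1}\Big).
\]
Now I would plug in the size information: $r_1,\ell_3-r_1$ are governed by $\epsilon$ and $M$ (so $\frac{1}{r_1+\ell_2}$ is essentially $\frac{1}{\ell_2}$, and $\frac{1}{\ell_3-r_1}<\frac{1}{M-\epsilon}$), while $r_2,r_3\in(s_2-\epsilon,s_2+\epsilon)$, $\ell_1,\ell_2\in(s_1-\epsilon,s_1+\epsilon)$. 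Estimating each fraction from below on the left and from above on the right by its value at the appropriate endpoint of these small intervals, the inequality follows provided
\[
\frac{1}{r_1+r_2}+\frac{1}{r_3-r_1}+\frac{1}{\ell_1+\ell_2}
> \frac{1}{\ell_1+r_2}+\frac{1}{r_3-\ell_1}+\frac{1}{\ell_2}+\frac{1}{M-\epsilon}
\]
holds in the limit $\epsilon\to 0$, $M\to\infty$; in that limit $r_1\to 0$, $r_2,r_3\to s_2$, $\ell_1,\ell_2\to s_1$, and the condition collapses to exactly
\[
\frac{2}{s_2}+\frac{1}{2s_1}>\frac{1}{s_1+s_2}+\frac{1}{s_2-s_1}+\frac{1}{s_1},
\]
wait — more carefully, one gets $\frac{1}{s_2}+\frac{1}{s_2}+\frac{1}{2s_1}>\frac{1}{s_1+s_2}+\frac{1}{s_2-s_1}+\frac{1}{s_1}$, which after noting $\frac{1}{s_1+s_2}+\frac{1}{s_2-s_1}=\frac{2s_2}{s_2^2-s_1^2}$ is a one-variable inequality in $s_1/s_2$; Lemma~\ref{M-eps-lemma1} is precisely the statement that the perturbed version of this inequality (with the explicit $\epsilon,M$) holds under $\frac{s_1}{s_2}>\frac{1+2\sqrt3}{5}$, so I would simply quote it.

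Concretely, then, the proof is: (i) use Theorem~\ref{thm-phi1} to replace $c$ by $c_\pi$ on the left and use $c\le c_\pi$ on the right; (ii) expand, cancel the common $\frac{1}{r_2+r_3},\frac{1}{\ell_2+\ell_3}$ terms; (iii) bound each remaining fraction by the monotone endpoint estimate coming from the small intervals $(0,\epsilon)$, $(s_2-\epsilon,s_2+\epsilon)$, $(s_1-\epsilon,s_1+\epsilon)$, $(M,\infty)$ — using that $x\mapsto \frac{1}{a+x}$ is decreasing and $x\mapsto\frac1{x-a}$ is decreasing — to reduce to an inequality among $s_1,s_2,\epsilon,M$; (iv) recognize this inequality as (a consequence of) \eqref{M-eps-condition} in Lemma~\ref{M-eps-lemma1}. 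The main obstacle is step (iii): one has to be careful about the direction of each monotonicity and make sure that every term is estimated in the direction that \emph{weakens} the inequality we want, so that the endpoint inequality is genuinely sufficient; the bookkeeping of which variable sits in which small interval, and matching it exactly to the six fractions in \eqref{M-eps-condition}, is the only real work, and it is routine once the monotonicities are laid out.
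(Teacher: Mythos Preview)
Your steps (i) and (ii) are fine and match the paper. The gap is in step (iii)--(iv): the upper bound $c(\ell_1,r_2,r_3)\le c_\pi(\ell_1,r_2,r_3)$ is too weak, and the inequality you land on is \emph{not} the one in Lemma~\ref{M-eps-lemma1} --- in fact it is false in the regime of interest.

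Concretely, $c_\pi(\ell_1,r_2,r_3)$ contains the term $\tfrac{1}{r_3-\ell_1}$. Here $r_3\in(s_2,s_2+\eps)$ and $\ell_1\in(s_1-\eps,s_1)$, so $r_3-\ell_1\approx s_2-s_1$, which is small precisely because the hypothesis $\tfrac{s_1}{s_2}>\tfrac{1+2\sqrt3}{5}\approx 0.893$ forces $s_1$ close to $s_2$. Your limiting inequality
\[
\frac{2}{s_2}+\frac{1}{2s_1}>\frac{1}{s_1+s_2}+\frac{1}{s_2-s_1}+\frac{1}{s_1}
\]
is therefore hopeless: with $s_1=0.9$, $s_2=1$ the left side is about $2.56$ and the right side exceeds $11$. (Algebraically, setting $t=s_1/s_2$, the inequality rearranges to $4t^3-t^2+1<0$, which fails for every $t\in(0,1)$.) This is also why the inequality does not ``collapse to'' \eqref{M-eps-condition}: that condition has $\tfrac{\sqrt3}{s_1-\eps}$ on the right, not a $\tfrac{1}{s_2-s_1}$-type term.

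The paper's fix is exactly at this point: for the triple $(\ell_1,r_2,r_3)$ it does \emph{not} use the aligned configuration but the equilateral one,
\[
c(\ell_1,r_2,r_3)\le c_\Delta(\ell_1,r_2,r_3)\le c_\Delta(\ell_1,\ell_1,\ell_1)=\frac{\sqrt3}{\ell_1}\le\frac{\sqrt3}{s_1-\eps},
\]
which is bounded independently of $s_2-s_1$. For the other mixed triple $(r_1,\ell_2,\ell_3)$ the paper first observes that $r_1<\ell_1$ and $\varphi(\cdot,\ell_2)$ is increasing, so \eqref{phi1-root} is inherited and $c=c_\pi$ there as well; then endpoint bounds give $\tfrac{1}{s_1}+\tfrac{1}{\ell_2+\ell_3}+\tfrac{1}{M-\eps}$. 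The $\tfrac{1}{\ell_2+\ell_3}$ term then cancels exactly against the corresponding term in $c(\ell_1,\ell_2,\ell_3)$, and what remains is precisely \eqref{M-eps-condition}. So your outline is right in spirit, but you must replace the $c_\pi$ bound on $(\ell_1,r_2,r_3)$ by the $c_\Delta$ bound; without that, the argument does not close.
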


\begin{proof}
	Since the condition \eqref{phi1-root} is satisfied, we have
	\begin{align*}
		c(r_1,r_2,r_3) &= c_\pi(r_1,r_2,r_3) = \frac{1}{r_1+r_2} + \frac{1}{r_2+r_3} + \frac{1}{r_3-r_1} \\
		&\geq \frac{1}{s_2+\epsilon} + \frac{1}{2s_2+\epsilon} + \frac{1}{s_2+\epsilon}
	\end{align*} 
	and
	\begin{align*}
		c(\ell_1,\ell_2,\ell_3) &=c_\pi(\ell_1,\ell_2,\ell_3) = \frac{1}{\ell_1+\ell_2} + \frac{1}{\ell_2+\ell_3} + \frac{1}{\ell_3-\ell_1} \\
		&\geq \frac{1}{2s_1+\epsilon} + \frac{1}{\ell_2+\ell_3} + 0.
	\end{align*}
	
	Now we analyze the other side. Since $(\ell_1,\ell_2,\ell_3)$ satisfy \eqref{phi1-root} and $r_1 < \ell_1$, then also $(r_1,\ell_2,\ell_3)$ satisfy \eqref{phi1-root}\footnote{It can be computed that $\varphi(r_1,r_2)$ it increasing in $r_1$.}, so that 
	\begin{align*}
		c(r_1,\ell_2,\ell_3) &= c_\pi(r_1,\ell_2,\ell_3) = \frac{1}{r_1+\ell_2} + \frac{1}{\ell_2+\ell_3} + \frac{1}{\ell_3-r_1} \\
		&\leq \frac{1}{s_1} + \frac{1}{\ell_2+\ell_3} + \frac{1}{M-\epsilon}.
	\end{align*}
	
	For the other term we have
	\[ c(\ell_1,r_2,r_3) \leq c_{\Delta}(\ell_1,r_2,r_3) \leq c_{\Delta}(\ell_1,\ell_1,\ell_1) = \frac{\sqrt{3}}{\ell_1} \leq \frac{\sqrt{3}}{s_1-\epsilon}, \]
	where $c_{\Delta}(r_1, r_2, r_3) = C(r_1,r_2,r_3,\frac{2\pi}{3},\frac{4\pi}{3})$ denotes the cost when the angles are the ones of an equilateral triangle. The second inequality follows from the fact that we are keeping the angles fixed, but decreasing the size of the sides. By comparing the expressions and using \autoref{M-eps-lemma1} we get the desired inequality.
\end{proof}
Finally we come to the proof of \autoref{thm-counterexample}.

\textit{Proof of \autoref{thm-counterexample}}
We divide the proof in three steps: \textbf{1) }the non-optimality of the $DDI$ map, \textbf{2) }the non-optimality of the other maps in the class $\{D,I\}^3$, and \textbf{3) }the existence of smooth counterexample densities. In the first two steps we follow the ideas of \cite[Proof of Counterexample 2.7]{colombo2016counterexamples}.

\textbf{Step 1) The non-optimality of the $DDI$ map: } 
	We fix  $\eps, M$ according to \autoref{M-eps-lemma1}. Since $\rho$ is fully supported and $T$ is continuous, we have
	\[ T(x) \to s_2^- \text{ and } T^2(x) \to s_2^+ \text{ as } x \to 0, \]
	and
	\[ T(x) \to s_1^+ \text{ and } T^2(x) \to +\infty \text{ as } x \to s_1^-. \]
	
	This allows to choose triplets $(r_1,r_2,r_3)$ and $(\ell_1, \ell_2, \ell_3)$ as in the hypothesis of \autoref{M-eps-lemma2} such that 
	\[ (r_1, r_2, r_3) = (x, T(x), T^2(x)) \quad \text{and} \quad (\ell_1, \ell_2, \ell_3) = (y, T(y), T^2(y)). \]
	
	Apply \autoref{M-eps-lemma2} to conclude that the support of the DDI map is not $c$-cyclically monotone.\\\\
\textbf{Step 2) The non-optimality of the other maps in the class $\{D,I\}^3$ }
We assume that $S$ is the DID map. The proof for the other maps of the class $\{D,I\}^3$ requires slightly different choices of intervals, but the idea is the same.  It suffices to find two triples of radii $(l,S(l),S^2(l))$ and $(r,S(r),S^2(r))$ where $l,r\in[0,s_1]$ such that 
\[c(l,S(l),S^2(r))+c(r,S(r),S^2(l))<c(l,S(l),S^2(l))+c(r,S(r),S^2(r))\]
To this end, let us show that 
\begin{align}
&\text{There exist parameters }\tilde\alpha\in(0,s_1),~\tilde\beta\in(s_1,s_2)\,,\text{ and }\tilde M>s_2\text{ such that }\nonumber\\
&c=c_\pi\text{ on the set }(0,\tilde\alpha]\times[\tilde\beta,s_2)\times[\tilde M,\infty)\text{ and }\nonumber\\
&\text{for all }r\in(0,\tilde\alpha)\text{ we have }S(r)\in [\tilde\beta,s_2)\text{ and }S^2(r)\in [\tilde M,\infty)\,.\label{eq:tag1}
\end{align}
Let us pick an $\alpha\in (0,s_1)$. We set $\beta=S(\alpha)$ and $M=S^2(\alpha)$. A comment: The DID map $S$ maps the interval $(0,\alpha]$ first to the interval $[\beta,s_2)$ (by $S$) and then to the half-line $[M,\infty)$ (by $S^2$). By Lemma 4.1 in \cite{colombo2016counterexamples} we can fix a $\tilde M>s_2$ such that
\[c=c_\pi~~~\text{on the set }(0,\alpha]\times[\beta,s_2)\times[\tilde M,\infty)\,.\]
If $\tilde M\le M$, we can choose in \eqref{eq:tag1} $\tilde\alpha=\alpha$ and $\tilde\beta=\beta$, and the claim follows. If $\tilde M>M$, we choose $\tilde\alpha=S(\tilde M)$ and $\tilde\beta=S^2(\tilde M)$, because now by the monotonicities of $S$ we have that  $(0,\tilde\alpha]\subset (0,\alpha)$ and $[\tilde\beta,s_2)\subset [\beta,s_2]$. 

Now let us prove that the $DID$ map is not optimal. 
We fix intervals $I\subset[0,s_1]$ and $J\subset[s_1,s_2]$ and a halfline $H\subset[s_2,\infty)$, given by Condition \eqref{eq:tag1}. We also fix points $r,l\in I$; without loss of generality we may assume that $l<r$. By the choice of $I$, $J$ and $H$ we have $c=c_\pi$ on the Cartesian product $I\times J\times H$, and therefore we have 
\begin{equation}\label{eq:tag2}
c(r,S(r),S^2(r))=c_\pi(r,S(r),S^2(r))~~~\text{and}~~~c(l,S(l),S^2(l))=c_\pi(l,S(l),S^2(l))\,.
\end{equation}
Denoting 
\[x_1=-S(l), ~~x_2=-S(r),~~ x_3=l,~~x_4=r,~~x_5=S^2(r)\,,\text{ and }x_6 =S^2(l)\,,\]
we have six ordered points on the real line. 
As has been proved in \cite{colombo2013multimarginal}, the optimal way of coupling these points is 
\[(x_1,x_3,x_5)~~~\text{and}~~~(x_2,x_4,x_6)\,.\]
Therefore, denoting by $c_{1D}$ the one-dimensional Coulomb cost, we have 
\[c_{1D}(x_1,x_3,x_5)+c_{1D}(x_2,x_4,x_6)<c_{1D}(x_1,x_3,x_6)+c_{1D}(x_2,x_4,x_5)\,.\]
By Condition \eqref{eq:tag2}  this is equivalent to
\begin{equation}\label{eq:tag3}
c_{1D}(x_1,x_3,x_5)+c_{1D}(x_2,x_4,x_6)<c(l,S(l),S^2(l))+c(r,S(r),S^2(r))\,.
\end{equation}
By our choices the points $x_i$ we have 
\[c_{1D}(x_1,x_3,x_5)=c_\pi(l,S(l),S^2(r))~~~\text{and}~~~c_{1D}(r_2,r_4,r_6)=c_\pi(r,S(r),S^2(l))\,.\]
Combining this with Condition \eqref{eq:tag3} gives
\begin{equation}\label{eq:tag4}
c_\pi(l,S(l),S^2(r))+c_\pi(r,S(r),S^2(l))<c(l,S(l),S^2(l))+c(r,S(r),S^2(r))\,.
\end{equation}
The radial cost $c$ is obviously majorized by the cost $c_\pi$ for all radii, so we get
\begin{align*}
&c(l,S(l),S^2(r))+c(r,S(r),S^2(l))\\
&\le c_\pi(l,S(l),S^2(r))+c_\pi(r,S(r),S^2(l))\stackrel{a)}{<}c(l,S(l),S^2(l))+c(r,S(r),S^2(r))\,,
\end{align*}
where the inequality (a) is Condition \eqref{eq:tag4}. 
So all in all
\[c(l,S(l),S^2(r))+c(r,S(r),S^2(l))<c(l,S(l),S^2(l))+c(r,S(r),S^2(r))\]
contradicting the $c$-cyclical monotonicity of the map $S$ as we set out to prove. \hfill$\square$

\textbf{Step  3) The existence of smooth counterexample densities: } 
As noted in \autoref{functionfi}, the polynomial condition \eqref{phi1-condition} can be solved for $r_3$ and transformed into
\[r_3\ge \varphi (r_1,r_2)\,,\]
where 
\[ \varphi (r_1, r_2) = \frac{5 r_1 r_2 + r_2^2 + (r_1 + r_2) \sqrt{r_2^2 +
   12 r_1 r_2 - 4 r_1^2}}{2 (r_2 - r_1)}. \]
If we study this condition on the `graph' of the $DDI$-map 
\[\mathcal{G}:=\{(x,T(x),T^2(x))~|~x\in[0,s_1]\}\,,\]
that is, if we plug in $(r_1,r_2,r_3)=(x,T(x),T^2(x)))$, we see that the main assumption \eqref{eq:phi1onDDI}  of this theorem can be written in the form: 
\[
T^2(x)\ge \varphi(x,T(x))~~~\text{for }\rho\text{-a. e. }x\in (0,s_1)\,.
\]
This condition is satisfied whenever
\begin{equation}\label{eq:t2with_h}
T^2(x)=\varphi(x,T(x))+h(x)
\end{equation}
where $h:[0,s_1]\to\R$ is strictly positive with $h(0)=0$.  
 The expression \eqref{eq:t2with_h} gives us an efficient way to construct counterexample densities. We can choose the densities  $\rho_1$ and $\rho_2$ on the respective intervals $[0,s_1]$  and $[s_1,s_2]$ in any way we like. Then we define the tail for the measure (formally: $\rho|_{[s_2,\infty)}=:\rho_3$) by setting  
 \begin{equation}\label{eq:tail}
 \rho_3=(\varphi+h)_\sharp\rho_1=T^2_\sharp\rho_1
 \end{equation}
 for a suitable $\rho_1$. Above we have abbreviated $\varphi(x)=\varphi(x,T(x))$, and we will use the same abbreviation in the following. Note that once the densities $\rho_1$, and $\rho_2$ have been chosen, the function $\varphi(x)$ is well-defined for all $x\in(0,s_1)$ because the $DDI$ map $T$ that maps the first interval to the second one is determined by the densities $\rho_1$ and $\rho_2$. 
 
In the beginning of this theorem, we have already assumed that the densities $\rho|_{[0,s_2]}:=\rho_1+\rho_2$ satisfy $\frac{s_1}{s_2} > \frac{1 + 2\sqrt{3}}{5}$. If we further assume that $\rho_1, \rho_2$ are smooth and satisfies the assumption $\frac{\rho(0)}{\rho(s_2)}>\frac72$ (which will be motivated below), and then define the tail according to \eqref{eq:t2with_h} choosing a specific $h$ to be defined shortly, we actually get a smooth counterexample density. 

First we illustrate how to generate a continuous density. 
We plug in the expression of $\varphi$ the point $(x_1,x_2)=(x, T (x))$ and compute the derivative with
respect to $x$.  We omit the slightly tedious computations because they are not important for expressing the idea of the counterxample. At $x=0$ the derivative reads
\[\varphi'(0)=\tfrac12(5+T'(0)+1+T'(0)+8)=T'(0)+7\,.\]
By choosing $\rho_1$, $\rho_2$ and $h$ smooth, the continuity is clear everywhere except at the point $s_2$. In particular, we must study the condition $\rho(s_2^-)=\rho(s_2^+)$. 

Abbreviating for all $x\in (0,s_1)$  $\psi(x)=\varphi(x)+h(x)$ and using the Monge-Amp\`ere equation for $T^2 (x) = \psi (x)$ we get
\begin{equation}\label{eq:2tag1}
\rho (x) = \psi' (x) \rho (\psi (x)) \,.
\end{equation}
Analogously, using Monge-Ampere for $T (x)$ we get
\begin{equation}\label{eq:2tag2}
\rho (x) = - \rho (T (x)) T' (x)\,.
\end{equation}
Putting $x = 0$ first of the two gives us $\rho (s_2^+)$ the second one gives
us $\rho (s_2^-)$ and from the equality
\[ \psi' (0) \rho (s_2^+) = - T' (0) \rho (s_2^-) \]
we get
\[h'(0)+7+T'(0)=-T'(0)\,,\]
that is, 
\begin{equation}\label{eq:2tag3}
T'(0)=-\frac12(h'(0)+7)\,.
\end{equation}
Setting $x=0$ in \eqref{eq:2tag2} gives $T'(0)=-\frac{\rho(0)}{\rho(s_2)}$, and combining this with \eqref{eq:2tag3} leads to the condition
\[\frac{\rho(0)}{\rho(s_2)}=\frac12(h'(0)+7)\,,\]
that is,
\[h'(0)=2\frac{\rho(0)}{\rho(s_2)}-7\,.\]
This is the condition our auxiliary function $h$ must satisfy to guarantee the continuity of the density $\rho$. As will be shown below, we will also need to assume that the first derivative of $h$ is strictly positive, this is why the final assumption on the values $\rho(0)$ and $\rho(s_2)$ must read $\frac{\rho(0)}{\rho(s_2)}>\frac72$ -- that is, the inequality has to be strict. 

\textbf{About the differentiability of the counterexample densities }\\
Let us see what the condition of $\rho$ being differentiable at $s_2$ looks like, in terms of densities. The differentiability of the rest of the tail is guaranteed by the smoothness of $\rho_1$, $\rho_2$, and $\psi$. 

We write the Monge-Ampere equation for $\psi$:
\[\psi'(x)\rho(\psi(x)) = \rho(x) \quad x \in (0, s_1). \]

We differentiate $n$-times both sides with respect to $x$:
\[ \sum_{k = 0}^n \binom{n}{k} \psi^{(n-k+1)}(x) \frac{d^k}{dx^k} \rho(\psi(x)) = \rho^{(n)}(x) \]
and we compute for $x = 0$
\[ \sum_{k = 0}^n \binom{n}{k} \psi^{(n-k+1)}(0) \left[ \frac{d^k}{dx^k} \rho(\psi(x)) \right]_{x = 0} = \rho^{(n)}(0). \]

Suppose that we already defined $h'(0), \dotsc, h^{(n)}(0)$. The only term containing $h^{(n+1)}(0)$ is obtained for $k = 0$ in the LHS, and reads $\psi^{(n+1)}(0) \rho(s_2)$. Hence we can isolate it and get
\[ \psi^{(n+1)}(0) \rho(s_2) = \rho^{(n)}(0) -\sum_{k = 1}^n \binom{n}{k} \psi^{(n-k+1)}(0) \left[ \frac{d^k}{dx^k} \rho(\psi(x)) \right]_{x = 0}. \]

Since $\rho$ is positive everywhere, in particular $\rho(s_2) > 0$ and get a well-defined expression for $h^{(n+1)}(0)$ depending on $h'(0), \dotsc, h^{(n)}(0)$ (already previously defined by induction).
The base step is given by $h(0) = 0$. 

Now by Borel's lemma there exists a smooth function $f:\R\to\R$ such that $f^{(k)}=h^{(k)}$ for all natural numbers $k$. Because $h'(0)=f'(0)>0$ and $h(0)=f(0)=0$, there is a $\delta>0$ and an interval $[0,\delta]$ such that $f(x)>0$ for all $x\in[0,\delta]$ We now choose our $h$ to coincide with $f$ in the interval $[0,\tfrac{\delta}{2})$ and to be constant, equal to $f(\delta)$ on the interval $[\delta,\infty)$. On the interval $(\tfrac{\delta}{2},\delta)$ we join these two parts smoothly, so that the function $h$ is smooth on all of its domain $[0,\infty)$ -- obviously at $0$ we mean by smoothness the existence of all derivatives from the right.   
Now by using the $h$ generated above and by defining the tail density $\rho_3$ according to \eqref{eq:tail} we get the existence of smooth counterexample densities. \hfill$\square$


\section*{Acknowledgement}
The research of the author is part of the project   financed by the Italian Ministry of Research
 and is partially financed  by the {\it ``Fondi di ricerca di ateneo''}  of the  University of Firenze. 
 
The research of the author is part of the project  {\it "Analisi puntuale ed asintotica di energie di tipo non locale collegate a modelli della fisica"}  of the  GNAMPA-INDAM.

The research of the third author is part of the project \emph{Contemporary topics on multi-marginal optimal mass transportation}, funded by the Finnish Postdoctoral Pool (Suomen Kulttuuris\"a\"ati\"o). 


\bibliographystyle{plain}

\begin{thebibliography}{10}

\bibitem{beiglbock:hal-00515407}
Mathias Beiglb{\"o}ck, Christian L{\'e}onard, and Walter Schachermayer.
\newblock A general duality theorem for the monge-kantorovich transport
  problem.
\newblock {\em Studia Mathematica}, 209:151--167, 2012.

\bibitem{braides2002gamma}
Andrea Braides et~al.
\newblock {\em Gamma-convergence for Beginners}, volume~22.
\newblock Clarendon Press, 2002.

\bibitem{buttazzo2012optimal}
Giuseppe Buttazzo, Luigi De~Pascale, and Paola Gori-Giorgi.
\newblock Optimal-transport formulation of electronic density-functional
  theory.
\newblock {\em Physical Review A}, 85(6):062502, 2012.

\bibitem{carlier2003class}
Guillaume Carlier.
\newblock On a class of multidimensional optimal transportation problems.
\newblock {\em Journal of convex analysis}, 10(2):517--530, 2003.

\bibitem{carlier2008optimal}
Guillaume Carlier, Chlo{\'e} Jimenez, and Filippo Santambrogio.
\newblock Optimal transportation with traffic congestion and wardrop
  equilibria.
\newblock {\em SIAM Journal on Control and Optimization}, 47(3):1330--1350,
  2008.

\bibitem{colombo2013multimarginal}
Maria Colombo, Luigi De~Pascale, and Simone Di~Marino.
\newblock Multimarginal optimal transport maps for 1-dimensional repulsive
  costs.
\newblock {\em Canad. J. Math}, 67:350--368, 2013.

\bibitem{colombo2015equality}
Maria Colombo and Simone Di~Marino.
\newblock Equality between monge and kantorovich multimarginal problems with
  coulomb cost.
\newblock {\em Annali di Matematica Pura ed Applicata (1923-)},
  194(2):307--320, 2015.

\bibitem{colombo2016counterexamples}
Maria Colombo and Federico Stra.
\newblock Counterexamples in multimarginal optimal transport with coulomb cost
  and spherically symmetric data.
\newblock {\em Mathematical Models and Methods in Applied Sciences},
  26(06):1025--1049, 2016.

\bibitem{cotar2013density}
Codina Cotar, Gero Friesecke, and Claudia Kl{\"u}ppelberg.
\newblock Density functional theory and optimal transportation with coulomb
  cost.
\newblock {\em Communications on Pure and Applied Mathematics}, 66(4):548--599,
  2013.

\bibitem{dal2012introduction}
Gianni Dal~Maso.
\newblock {\em An introduction to $\Gamma$-convergence}, volume~8.
\newblock Springer Science \& Business Media, 2012.

\bibitem{de2015optimal}
Luigi De~Pascale.
\newblock Optimal transport with coulomb cost. approximation and duality.
\newblock {\em ESAIM: Mathematical Modelling and Numerical Analysis},
  49(6):1643--1657, 2015.

\bibitem{de2019c}
Luigi De~Pascale.
\newblock On c-cyclical monotonicity for optimal transport problem with coulomb
  cost.
\newblock {\em European Journal of Applied Mathematics}, 30(6):1210--1219,
  2019.

\bibitem{friesecke2013n}
Gero Friesecke, Christian~B Mendl, Brendan Pass, Codina Cotar, and Claudia
  Kl{\"u}ppelberg.
\newblock $n$-density representability and the optimal transport limit of the
  hohenberg-kohn functional.
\newblock {\em The Journal of chemical physics}, 139(16):164109, 2013.

\bibitem{gangbo1998optimal}
Wilfrid Gangbo and Andrzej {\'S}wiech.
\newblock Optimal maps for the multidimensional monge-kantorovich problem.
\newblock {\em Communications on Pure and Applied Mathematics: A Journal Issued
  by the Courant Institute of Mathematical Sciences}, 51(1):23--45, 1998.

\bibitem{ghoussoub2014remarks}
Nassif Ghoussoub and Bernard Maurey.
\newblock Remarks on multi-marginal symmetric monge-kantorovich problems.
\newblock {\em Discrete and Continuous Dynamical Systems - A}, 34, 2014.

\bibitem{ghoussoub2013self}
Nassif Ghoussoub and Abbas Moameni.
\newblock A self-dual polar factorization for vector fields.
\newblock {\em Communications on Pure and Applied Mathematics}, 66(6):905--933,
  2013.

\bibitem{ghoussoub2014symmetric}
Nassif Ghoussoub and Abbas Moameni.
\newblock Symmetric monge--kantorovich problems and polar decompositions of
  vector fields.
\newblock {\em Geometric and Functional Analysis}, 24(4):1129--1166, 2014.

\bibitem{gori2010density}
Paola Gori-Giorgi and Michael Seidl.
\newblock Density functional theory for strongly-interacting electrons:
  perspectives for physics and chemistry.
\newblock {\em Physical Chemistry Chemical Physics}, 12(43):14405--14419, 2010.

\bibitem{gori2009density}
Paola Gori-Giorgi, Michael Seidl, and Giovanni Vignale.
\newblock Density-functional theory for strongly interacting electrons.
\newblock {\em Physical review letters}, 103(16):166402, 2009.

\bibitem{heinich2002probleme}
Henri Heinich.
\newblock Probl{\`e}me de monge pour n probabilit{\'e}s.
\newblock {\em Comptes Rendus Mathematique}, 334(9):793--795, 2002.

\bibitem{hohenberg1964inhomogeneous}
Pierre Hohenberg and Walter Kohn.
\newblock Inhomogeneous electron gas.
\newblock {\em Physical review}, 136(3B):B864, 1964.

\bibitem{kellerer1984duality}
Hans~G Kellerer.
\newblock Duality theorems for marginal problems.
\newblock {\em Zeitschrift f{\"u}r Wahrscheinlichkeitstheorie und verwandte
  Gebiete}, 67(4):399--432, 1984.

\bibitem{kohn1965self}
Walter Kohn and Lu~Jeu Sham.
\newblock Self-consistent equations including exchange and correlation effects.
\newblock {\em Physical review}, 140(4A):A1133, 1965.

\bibitem{lieb2002density}
Elliott~H. Lieb.
\newblock Density functionals for coulomb systems.
\newblock In {\em Inequalities}, pages 269--303. Springer, 2002.

\bibitem{mendl2013kantorovich}
Christian~B Mendl and Lin Lin.
\newblock Kantorovich dual solution for strictly correlated electrons in atoms
  and molecules.
\newblock {\em Physical Review B}, 87(12):125106, 2013.

\bibitem{pass2011uniqueness}
Brendan Pass.
\newblock Uniqueness and monge solutions in the multimarginal optimal
  transportation problem.
\newblock {\em SIAM Journal on Mathematical Analysis}, 43(6):2758--2775, 2011.

\bibitem{pass2012local}
Brendan Pass.
\newblock On the local structure of optimal measures in the multi-marginal
  optimal transportation problem.
\newblock {\em Calculus of Variations and Partial Differential Equations},
  43(3-4):529--536, 2012.

\bibitem{pass2013remarks}
Brendan Pass.
\newblock Remarks on the semi-classical hohenberg--kohn functional.
\newblock {\em Nonlinearity}, 26(9):2731, 2013.

\bibitem{rachev1998mass}
Svetlozar~T Rachev and Ludger R{\"u}schendorf.
\newblock {\em Mass Transportation Problems: Volume I: Theory}, volume~1.
\newblock Springer Science \& Business Media, 1998.

\bibitem{seidl1999strong}
Michael Seidl.
\newblock Strong-interaction limit of density-functional theory.
\newblock {\em Physical Review A}, 60(6):4387, 1999.

\bibitem{seidl2007strictly}
Michael Seidl, Paola Gori-Giorgi, and Andreas Savin.
\newblock Strictly correlated electrons in density-functional theory: A general
  formulation with applications to spherical densities.
\newblock {\em Physical Review A}, 75(4):042511, 2007.

\bibitem{seidl1999strictly}
Michael Seidl, John~P Perdew, and Mel Levy.
\newblock Strictly correlated electrons in density-functional theory.
\newblock {\em Physical Review A}, 59(1):51, 1999.

\end{thebibliography}

\end{document}